\documentclass[11pt]{article}
\usepackage[T1]{fontenc}
\usepackage{lmodern}

\usepackage[letterpaper,margin=1in]{geometry}
\usepackage{amsmath,amssymb,mathtools,amsthm,bm,mathrsfs}
\IfFormatAtLeastTF{2026-06-01}{}{\usepackage{aliascnt}}
\usepackage{booktabs,tabularx,array}
\usepackage{enumitem}
\usepackage[nopatch=footnote]{microtype}
\usepackage{xcolor}
\usepackage[numbers,sort&compress]{natbib}
\usepackage{hyperref}
\usepackage[nameinlink,noabbrev]{cleveref}
\usepackage{setspace}

\allowdisplaybreaks[3]
\numberwithin{equation}{section}
\setlist[itemize]{leftmargin=1.65em,itemsep=0.16em,topsep=0.25em}
\setlist[enumerate]{leftmargin=1.9em,itemsep=0.18em,topsep=0.25em}
\hypersetup{hidelinks,pdftitle={Joint Zeroth-Order Lower Bounds for Goldstein Stationarity on Euclidean Balls}}

\newtheorem{theorem}{Theorem}[section]
\IfFormatAtLeastTF{2026-06-01}{%
  \newcommand{\sharedtheorem}[2]{\newtheorem{#1}[theorem]{#2}}%
}{%
  \newcommand{\sharedtheorem}[2]{%
    \newaliascnt{#1}{theorem}%
    \newtheorem{#1}[#1]{#2}%
    \aliascntresetthe{#1}}%
}
\sharedtheorem{lemma}{Lemma}
\sharedtheorem{proposition}{Proposition}
\sharedtheorem{corollary}{Corollary}
\sharedtheorem{definition}{Definition}
\sharedtheorem{assumption}{Assumption}
\theoremstyle{remark}
\sharedtheorem{remark}{Remark}

\newcommand{\R}{\mathbb{R}}

\newcommand{\E}{\mathbb{E}}
\newcommand{\Prob}{\mathbb{P}}

\newcommand{\clconv}{\operatorname{cl\,conv}}

\newcommand{\dist}{\operatorname{dist}}
\newcommand{\Lip}{\operatorname{Lip}}
\newcommand{\rank}{\operatorname{rank}}
\newcommand{\tr}{\operatorname{tr}}

\newcommand{\proj}{\operatorname{proj}}
\newcommand{\Normal}{\mathcal{N}}
\newcommand{\eps}{\varepsilon}
\newcommand{\inner}[2]{\left\langle #1,#2\right\rangle}
\newcommand{\norm}[1]{\left\lVert #1\right\rVert}
\newcommand{\op}{\mathrm{op}}

\title{\textbf{Joint Lower Bounds for Zeroth-Order Nonconvex Optimization on Euclidean Balls}}
\newcommand{\authornames}{Haihan Zhang, Wendao Wu, Chenheng Zhang, Yanyi Li, Chunyuan Zheng, Cong Fang, Haoxuan Li, Zhouchen Lin}
\newcommand{\affiliation}{Peking University}
\newcommand{\contactemail}{colombo.2026@outlook.com}
\newcommand{\mail}[1]{\href{mailto:#1}{\texttt{#1}}}
\newif\ifanonymous
\anonymousfalse
\ifanonymous
\author{Anonymous Authors}
\hypersetup{pdfauthor={Anonymous Authors}}
\else
\author{Haihan Zhang$^{1,*}$\quad Wendao Wu$^{1,*}$\quad Chenheng Zhang$^{1,*}$\\[0.20em]
Yanyi Li$^1$\quad Chunyuan Zheng$^1$\\[0.20em]
Cong Fang$^{1}$\quad Haoxuan Li$^{1,\dagger}$\quad Zhouchen Lin$^{1,\dagger}$\\[0.60em]
\small $^1$\affiliation\\[0.35em]
\footnotesize\mail{\contactemail}\\[0.35em]
\small $^*$Equal contribution.\quad $^\dagger$Corresponding authors.}
\hypersetup{pdfauthor={\authornames}}
\fi
\date{}
\newcommand{\ResearchAgentSystem}{our laboratory's internal auto-research system}

\begin{document}
\maketitle

\begin{abstract}
We prove a joint stochastic zeroth-order lower bound for Goldstein stationarity on a Euclidean query ball, even when the ball is guaranteed to contain a stationary point. In dimension \(d\), let \(f=\E F(\cdot;\xi)\), assume \(\E\Lip(F(\cdot;\xi))^2\le L_0^2\), and bound the initial objective gap over the ball by \(\Delta\). For neighborhood radius \(\delta>0\) and residual tolerance \(\eps>0\), our smooth hard family requires
\[
  \Omega\!\left(\frac{dL_0^2\Delta}{\delta\eps^3}\right)
\]
scalar evaluations for success probability \(1/2\), against randomized adaptive algorithms that may retain and repeatedly query each sampled function. The result holds for \(\eps\le cL_0\), \(\Delta\ge C\delta\eps\), and \(d\ge C[1+\log(2+\Delta L_0^2/(\delta\eps^3))]\), on a constructed ball of radius \(\Theta(\Delta/\eps)\). A sequence of localized regions forces repeated direction estimation, and an adaptive Gaussian posterior argument controls sample reuse. The result establishes a joint dimension and accuracy obstruction on solvable bounded-domain instances. The gap is local to the query ball; a matching minimax characterization at a common radius and the corresponding unrestricted global-gap lower bound remain open here.
\end{abstract}
\noindent\textbf{Keywords:} Zeroth-order complexity; Goldstein stationarity; lower bounds; sample reuse.

\medskip
{\small
\begin{samepage}
\noindent\textbf{AI Usage.}
Nearly the entire research pipeline for this paper was carried out by
\ResearchAgentSystem{}, powered by GPT-5.6 Sol. The system also conducted a
Lean-backed article audit of the resulting manuscript. The authors subsequently reviewed and approved the
mathematical claims, presentation, and formal artifacts, and take
responsibility for the final manuscript. The complete Lean audit report and the system's technical report will be made public at a later date.
\par
\end{samepage}
}

\paragraph{Publication Intent and Recognition of Independent Work.}

This preprint is released as a research output of our study of
automated scientific research, for public scrutiny and discussion.
As we do not intend to submit this manuscript to a journal or
conference, it has not undergone the full editorial refinement
and internal review we would normally require for submission.
The authors remain responsible for its contents and for correcting
any errors or inaccurate attribution.

We recognize that releasing these results may affect researchers
who have independently been pursuing the same questions, including
work that is not yet public.
If this applies to your work, we invite you to contact us at
\texttt{colombo.2026@outlook.com}.
A brief description of the overlap, together with any dated
materials you are comfortable sharing, would help us understand
its scope and chronology.
Such materials might include draft excerpts, presentation slides,
or revision records; a complete manuscript or unpublished proof
is not required to begin the conversation.

We will review these cases carefully and update citations,
acknowledgments, and accounts of independent or concurrent work
as appropriate, including revising priority claims where warranted.
Nonpublic materials will be treated confidentially and used only
to assess the overlap unless their authors authorize otherwise.
We will obtain explicit permission from the authors before
publicly discussing such materials or providing them to an
automated research system.

\section{Introduction}\label{sec:introduction}

We study stochastic zeroth-order optimization of a Lipschitz, possibly nonsmooth and nonconvex objective \(f(x)=\E_\xi F(x;\xi)\). Queries and the output lie in a known Euclidean ball \(X=B_2^d(x_0,R)\). Given \(\delta,\eps>0\), the goal is to find
\[
  x_{\rm out}\in X
  \quad\text{such that}\quad
  G_\delta(f,x_{\rm out})
  :=\dist\!\left(0,\clconv\!\bigcup_{\|y-x_{\rm out}\|\le\delta}
  \partial^C f(y)\right)\le\eps,
\]
where \(\partial^C f\) is the Clarke subdifferential. This Goldstein criterion allows nearby subgradients to cancel~\cite{Goldstein1977,Clarke1990}. The algorithm observes scalar sample-function values. Our lower bound applies even with the promise that \(X\) contains an ambient stationary point; the boundary of \(X\) is not itself a stationarity certificate.

Function values provide only scalar information about a local geometry that lives in \(d\) dimensions. Nevertheless, evaluating the same sampled function at nearby points allows algorithms to estimate useful directions without observing derivatives. Lin, Zheng, and Jordan~\cite{LinEtAl2022} established such guarantees for Goldstein stationarity, and Chen, Xu, and Luo~\cite{ChenEtAl2023} improved their accuracy dependence using variance reduction. Kornowski and Shamir~\cite{KornowskiShamir2024} subsequently obtained the scalar-evaluation bound
\begin{equation}
 O\!\left(\frac{dL_0^2\Delta}{\delta\eps^3}\right),
 \label{eq:benchmark}
\end{equation}
where \(L_0\) bounds the root mean square of the sample Lipschitz constants and \(\Delta\) bounds the initial objective gap. Their guarantee assumes a global gap and permits unrestricted queries. The progression of these rates is summarized in \Cref{tab:comparison}.

The product structure of \eqref{eq:benchmark} raises a lower-bound question. A separate obstruction for dimension and a separate obstruction for accuracy need not occur on the same instance: together, such bounds can force only the larger of the two costs, rather than their product. Kornowski and Shamir~\cite{KornowskiShamir2024} explicitly identify this distinction in discussing the optimality of their rate. Establishing a joint lower bound requires an instance on which the algorithm repeatedly pays the dimension-dependent cost as it makes progress toward stationarity. It must also account for sample reuse, since an algorithm can retain a sampled function and interrogate it at many adaptively chosen points.

\paragraph{A joint lower bound on Euclidean balls.}
We establish a lower bound with the full product in \eqref{eq:benchmark} in a bounded query model, even on instances guaranteed to have a stationary point in the ball. Here the initial gap \(\Delta\) is measured over the query ball, and the hard instance has radius \(R=\Theta(\Delta/\eps)\). By \Cref{cor:goldstein-lower}, every randomized adaptive algorithm needs \(\Omega(dL_0^2\Delta/(\delta\eps^3))\) scalar evaluations to attain Goldstein stationarity with probability at least \(1/2\). The result holds when \(\eps\le cL_0\), \(\Delta\ge C\delta\eps\), and the dimension exceeds a logarithmic threshold, stated in the corollary and in normalized form in \Cref{tab:comparison}. The oracle permits arbitrary reuse of each independently sampled function, charging one unit for each scalar evaluation. Thus even adaptive reuse cannot avoid the joint dimension and accuracy cost on this family.

The existence promise is essential to the interpretation of this result. A local gap alone does not ensure an ambient stationary point in a small query ball: a linear objective can satisfy the gap bound while having a nonzero gradient everywhere. Our construction instead places a stationary point inside the ball and makes its location difficult to discover. The local gap and bounded query domain distinguish our theorem from the global-gap benchmark in \eqref{eq:benchmark}; we do not claim a matching minimax characterization or an unrestricted global-gap lower bound.

\begin{table}[t]
\centering
\small
\setlength{\belowcaptionskip}{8pt}
\caption{Scalar-evaluation complexity for stochastic Goldstein stationarity, with \(L_0=\Delta=1\), \(0<\delta,\eps<1\), and constant success probability in every row. Our lower bound additionally requires \(\eps\le c\), \(\delta\eps\le 1/C\), and \(d\ge C[1+\log(2+1/(\delta\eps^3))]\). Prior upper bounds use a global gap and unrestricted queries. Our lower bound uses a local gap and promises a stationary point in the constructed query ball. These different settings do not establish a matching minimax rate.}
\label{tab:comparison}
\renewcommand{\arraystretch}{1.3}
\begin{tabularx}{\textwidth}{@{}>{\raggedright\arraybackslash}p{0.24\textwidth}>{\centering\arraybackslash}p{0.22\textwidth}>{\centering\arraybackslash}p{0.18\textwidth}>{\raggedright\arraybackslash}X@{}}
\toprule
Work & Upper bound & Lower bound & Access and domain \\
\midrule
Lin et al.~\cite{LinEtAl2022} & \(O(d^{3/2}\delta^{-1}\eps^{-4})\) & --- & Fresh common-sample pairs; unrestricted queries \\
Chen et al.~\cite{ChenEtAl2023} & \(O(d^{3/2}\delta^{-1}\eps^{-3})\) & --- & Common-sample differences; unrestricted queries \\
Kornowski--Shamir~\cite{KornowskiShamir2024} & \(O(d\delta^{-1}\eps^{-3})\) & --- & Fresh common-sample pairs; unrestricted queries \\
\midrule
\textbf{This Paper}\newline(\Cref{cor:goldstein-lower}) & --- & \(\boldsymbol{\Omega(d\delta^{-1}\eps^{-3})}\) & Reusable sample handles; stationary point promised; constructed \(R=\Theta(\eps^{-1})\) \\
\bottomrule
\end{tabularx}
\end{table}

\paragraph{Forcing the dimension cost at each stage.}
The central difficulty is to prevent one scalar observation from revealing useful information about several stages at once. A derivative zero-chain controls which coordinates appear in a gradient, but this alone does not control how an exact function value combines hidden information. Our construction instead separates the stages spatially. Each of a sequence of disjoint regions contains information about the transverse displacement needed to reach the next region. This makes progress depend on learning a new direction locally, while keeping the entire objective within the prescribed Lipschitz and gap budgets.

Sample reuse creates a second difficulty: repeated evaluations of one sampled function are correlated, so they cannot be treated as fresh noisy observations. In our construction, each retained sample contains a fixed Gaussian perturbation of the direction hidden in a region. Queries to that sample reveal projections of the same noisy vector. An adaptive Gaussian posterior argument tracks these projections and quantifies the information needed to locate the next region, regardless of how evaluations are allocated across retained samples. A shadow experiment, in which unresolved future regions are suppressed, then controls the possibility of bypassing this learning process. Until the first disagreement, its transcript agrees with the actual one; the posterior bound makes a premature visit to an unresolved region unlikely. The argument covers both adaptive queries and an arbitrary final output.

For an objective with \(H\)-Lipschitz gradient and a target gradient norm \(\eta\), the construction has \(\Theta(H\Delta/\eta^2)\) stages, each requiring \(\Theta(dL_0^2/\eta^2)\) scalar observations in the information argument. Their product gives the smooth lower bound in \Cref{thm:smooth-lower}. Taking \(H=\eps/\delta\) and \(\eta=2\eps\) transfers this obstruction to Goldstein stationarity: smoothness ensures that a Goldstein residual at most \(\eps\) implies gradient norm at most \(\eps+H\delta\).

\section{Related Work}\label{sec:related}

\subsection{Goldstein stationarity}

Goldstein~\cite{Goldstein1977} introduced the enlarged subdifferential used
in this paper; we follow the Clarke formulation~\cite{Clarke1990}.  Zhang
et al.~\cite{ZhangEtAl2020} gave the first finite-time guarantees for
Goldstein-type stationarity in nonsmooth nonconvex optimization.  Later work
developed implementable gradient-sampling and perturbation methods and
removed restrictive oracle assumptions~\cite{TianEtAl2022,DavisEtAl2022}.
Cutkosky, Mehta, and Orabona~\cite{CutkoskyEtAl2023} obtained the optimal
stochastic first-order accuracy dependence through an online-to-nonconvex
reduction.  These first-order methods observe gradients or subgradients and therefore do
not address the scalar-information bottleneck studied here. More recently, Kornowski~\cite{Kornowski2026Stationarity} proved a deterministic-objective lower bound of \(\Omega(\Delta L_0^2/(\delta\eps^3))\) for zero-respecting first-order algorithms, in a dimension constructed as a function of the accuracy parameters. That theorem has a global gap and does not assert our additional factor of a freely specified dimension or coverage of arbitrary adaptive value-query algorithms.

\subsection{Stochastic zeroth-order algorithms}

Lin et al.~\cite{LinEtAl2022} combine uniform smoothing with a common-sample
two-point estimator and stochastic gradient updates on the smoothed objective.
Chen et al.~\cite{ChenEtAl2023} reduce the estimation variance through recursive
differences of zeroth-order gradient estimates at consecutive iterates. Their
method reuses a sample across those estimates, illustrating why the number of
scalar evaluations must be distinguished from the number of independent samples.
Kornowski and Shamir~\cite{KornowskiShamir2024} instead use a Goldstein-subdifferential
inclusion to apply an online-to-nonconvex method directly to a smoothed objective.
This avoids the extra dimension dependence associated with optimizing its
gradient norm. Their estimator uses a fresh sample at two symmetric points.
The comparison in \Cref{tab:comparison} records the resulting rates; our reusable-handle
model permits all of these sample-access patterns. Liu and Luo~\cite{LiuLuo2026HeavyTail} extend this line to sample Lipschitz constants with bounded \(p\)-th moment, \(1<p\le2\), under an additional regularity assumption and a global lower bound. Their \(p=2\) specialization has the same principal dimension and accuracy exponents. These algorithmic results do not supply a lower bound for arbitrary sample reuse.

Two recent extensions change the feedback model or the stationarity target. El Bakkali, Chayti, and Saadi~\cite{ElBakkaliEtAl2026Comparison} study Goldstein stationarity using noisy pairwise comparisons governed by a known link function, rather than scalar sample-function values. Lei et al.~\cite{LeiEtAl2026SecondOrder} introduce a second-order Goldstein criterion for continuously differentiable functions with locally Lipschitz gradients and analyze a Gaussian-smoothing method under a coercivity-type assumption. These upper-bound results concern different oracle or regularity conditions and do not establish the joint lower bound considered here.

\subsection{Lower-bound context}

Classical zeroth-order lower bounds show that scalar feedback incurs a
dimension penalty even for convex objectives~\cite{DuchiEtAl2015}.  Smooth
stochastic nonconvex lower bounds identify the \(\eps^{-4}\) barrier for
first-order noisy optimization~\cite{ArjevaniEtAl2023}, but those bounds do not by themselves impose the additional dimension cost for scalar sample values.  Recent exact-value lower bounds
for convex optimization on Euclidean balls use a different target
(suboptimality) and a fixed deterministic objective rather than stochastic
sample functions~\cite{ZhangEtAl2026ExactValue}. Our nonconvex family couples
the scalar-information cost with the stochastic Lipschitz scale, smoothness,
initial gap, and Goldstein parameters on a family with a stationary point in the query ball. A separate recent result of Kornowski~\cite{Kornowski2026Determinism} proves an exponential dimension obstruction for deterministic algorithms. Its algorithm restriction distinguishes it from the randomized reusable-handle model considered here. These comparisons do not identify our bounded-domain lower bound with an unrestricted global-gap result.

\section{Model and Complexity Measures}\label{sec:model}

\subsection{Bounded query domain and stochastic objectives}

Fix a starting point \(x_0\in\R^d\) and a Euclidean query ball
\[
  X=B_2^d(x_0,R).
\]
An instance consists of a probability space and a jointly measurable sample
function \(F:\R^d\times\Xi\to\R\).  We assume
\(\E|F(x_0;\xi)|<\infty\).  There is a measurable random variable
\(L(\xi)\ge0\) such that, on a common full-measure event,
\begin{equation}
  |F(x;\xi)-F(y;\xi)|\le L(\xi)\norm{x-y}_2
  \quad\text{for all }x,y\in\R^d,
  \qquad
  \E L(\xi)^2\le L_0^2.
  \label{eq:source-3-1}
\end{equation}
The population objective is
\[
  f(x)=\E F(x;\xi),
\]
and the bounded-domain gap condition is
\begin{equation}
  f(x_0)-\min_{x\in X}f(x)\le\Delta.
  \label{eq:source-3-2}
\end{equation}
Queries and the gap condition are restricted to \(X\), while the stationarity target uses the ambient Goldstein neighborhood. For an output near the boundary, this neighborhood may extend outside \(X\), so the objective there can affect the residual. Defining the sample functions on \(\R^d\) specifies this target unambiguously.

For the smooth lower-bound problem we use the globally smooth subclass:
\(f\) is continuously differentiable and
\begin{equation}
  \norm{\nabla f(x)-\nabla f(y)}_2\le H\norm{x-y}_2
  \quad\text{for all }x,y\in\R^d.
  \label{eq:source-3-3}
\end{equation}
This stronger formulation avoids any boundary ambiguity in the later
Goldstein reduction.  The hard family constructed below is \(C^\infty\) and
satisfies \eqref{eq:source-3-3} globally.

\subsection{Reusable sample-function value oracle}

The algorithm may maintain any finite collection of sample handles.  On
request, a new handle \(h\) is assigned an independent draw
\(\xi_h\sim P\).  At each evaluation the algorithm selects a retained handle
\(h\) and a point \(x\in X\), both adaptively from the entire transcript,
and observes the exact scalar
\[
  F(x;\xi_h).
\]
Each scalar observation costs one unit.  Drawing a handle without evaluating
it is free and conveys no information.  The output must lie in \(X\).

This oracle includes as a special case the usual fresh common-sample pair:
draw a new handle, evaluate \(F(c+v;\xi_h)\) and
\(F(c-v;\xi_h)\), and discard the handle.  A lower bound for the reusable
oracle is therefore at least as strong as a lower bound for fresh pairs.

\subsection{Stationarity targets}

For a locally Lipschitz objective, let \(\partial^C f(x)\) denote the Clarke
subdifferential.  The closed-ball Goldstein subdifferential and residual are
\begin{equation}
  \partial_\delta f(x)
  =\clconv\!\bigcup_{\norm{y-x}_2\le\delta}\partial^C f(y),
  \qquad
  G_\delta(f,x)=\dist_2(0,\partial_\delta f(x)).
  \label{eq:source-3-4}
\end{equation}
For a smooth objective we also consider the ordinary target
\(\norm{\nabla f(x)}_2\le\eta\).

Let \(N_{\nabla}(d,R,L_0,H,\Delta,\eta)\) be the least integer \(N\) for
which one randomized algorithm, using at most \(N\) scalar evaluations on
every transcript, outputs \(x_{\rm out}\in X\) satisfying
\begin{equation}
  \Prob\!\left(\norm{\nabla f(x_{\rm out})}_2\le\eta\right)\ge\frac12
  \label{eq:source-3-5}
\end{equation}
uniformly over the smooth instance class.  Define
\(N_G(d,R,L_0,\Delta,\delta,\eps)\) analogously with the event
\(G_\delta(f,x_{\rm out})\le\eps\). If no such algorithm exists, the corresponding complexity is \(+\infty\).

For the lower bounds, we also consider the subclass with the promise
\[
  \text{there exists }x_*\in X\text{ such that }0\in\partial^C f(x_*).
\]
For smooth objectives this means \(\nabla f(x_*)=0\). Write
\(N_{\nabla}^{\mathrm{stat}}\) and \(N_G^{\mathrm{stat}}\) for the same complexity definitions restricted to these subclasses. The algorithm is informed of the promise, but is not given \(x_*\). This prevents the lower bound from being explained merely by absence of a solution. Since the subclass is smaller, \(N_G^{\mathrm{stat}}\le N_G\), and similarly in the smooth case. The auxiliary upper bound in \Cref{sec:upper} applies to the full class at its stated radius.

\section{Main Results}\label{sec:results}

Our first theorem gives the smooth core of the lower bound.

\begin{theorem}[Product lower bound on a Euclidean ball]
\label{thm:smooth-lower}
Let \(d\ge2\) and \(L_0,H,\Delta,\eta>0\).  There are absolute constants
\(c_0,C_0,c_N,c_R,C_R>0\) such that the following holds.  Suppose
\begin{equation}
  0<\eta\le c_0L_0,
  \qquad
  H\Delta\ge C_0\eta^2,
  \qquad
  d\ge C_0\!\left[1+\log\!\left(2+\frac{H\Delta L_0^2}{\eta^4}\right)\right].
  \label{eq:source-4-1}
\end{equation}
Then there is a deterministic radius \(R\) satisfying
\begin{equation}
  c_R\frac{\Delta}{\eta}
  \le R\le
  C_R\frac{\Delta}{\eta}
  \label{eq:source-4-2}
\end{equation}
and a family of \(C^\infty\) stochastic objectives on
\(X=B_2^d(0,R)\) such that, for every member of the family,
\begin{equation}
  \Lip(\nabla f;\R^d)\le H,
  \qquad
  f(0)-\min_X f\le\Delta,
  \qquad
  \E\Lip(F(\cdot;\xi))^2\le L_0^2.
  \label{eq:source-4-3}
\end{equation}
Every member also has a point \(x_*\in X\) with \(\nabla f(x_*)=0\). Even when this existence is promised, for every adaptive randomized reusable-handle algorithm using at most
\begin{equation}
  N\le c_N\frac{dL_0^2H\Delta}{\eta^4}
  \label{eq:source-4-4}
\end{equation}
scalar evaluations, there is a fixed instance in this family for which
\[
  \Prob\!\left(\norm{\nabla f(x_{\rm out})}_2\le\eta\right)<\frac12.
\]
In particular,
\(N_{\nabla}^{\mathrm{stat}}(d,R,L_0,H,\Delta,\eta)
 =\Omega(dL_0^2H\Delta/\eta^4)\).
\end{theorem}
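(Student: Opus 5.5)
The plan is to build a chain of $T=\Theta(H\Delta/\eta^2)$ spatially disjoint regions inside $X$. Each region encodes, through a hidden unit direction, the transverse displacement needed to reach the next region. Locating each next region will cost $\Theta(dL_0^2/\eta^2)$ scalar evaluations, and the total bound is the product of the two counts.

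\textbf{Construction.} Fix a length scale $r=c\,\eta/H$ and a height scale $\eta r\approx\eta^2/H$, so that $T\eta r\lesssim\Delta$ and the gap budget is respected. The path length is $Tr\approx\Delta/\eta$, consistent with \eqref{eq:source-4-2}. Let $v_1,\dots,v_T$ be independent uniform unit vectors, nearly orthogonal to a fixed progress axis $e_1$. Region $k$ is a ball of radius $\Theta(r)$ centered at $z_k=z_{k-1}+r e_1+r v_k$, with $d$ large enough that the chain stays separated. The log term in \eqref{eq:source-4-1} pays for a union bound over $T$ stages.

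Inside region $k$, the objective $f$ is a smooth bump-gated ramp. It has slope $\approx\eta$ along $e_1+v_k$, and it is glued, via $C^\infty$ cutoffs of scale $r$, to a plateau of value roughly $-k\eta r$ with gradient norm $>\eta$ everywhere except at a final stationary point in region $T$. Choosing $r\propto\eta/H$ keeps $\Lip(\nabla f)\le H$. The stochastic layer is
\[
F(x;\xi)=f(x)+\sum_k \phi_k(x)\,\langle \sigma g_k(\xi),x-z_{k-1}\rangle,
\]
where $g_k\sim\Normal(0,I_d)$ has mean zero, $\phi_k$ is the region cutoff, and $\sigma\approx L_0/\sqrt d$. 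Thus $\E\Lip(F)^2\lesssim \eta^2+\sigma^2 d\le L_0^2$ when $\eta\le c_0L_0$. We then fold the signal into the noise: the sample slope in region $k$ is $\eta v_k+\sigma g_k$, so one handle evaluated at many points of region $k$ reveals only projections of that single fixed Gaussian vector $\eta v_k+\sigma g_k$. This is the point where sample reuse is neutralized.

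\textbf{Information step.} A sample vector $w=\eta v+\sigma g$ is available to the algorithm, but each query reveals one linear functional of it, with the directions chosen adaptively. Conditioned on the transcript, the posterior of $v$ restricted to the queried subspace is Gaussian-like. Summed over handles, the total information about $v$ after $m$ queries is $\lesssim m\eta^2/\sigma^2 = m\eta^2 d/L_0^2$. Localizing the next region requires $\Omega(d)$ nats, namely correlation $\Omega(1)$ with a uniform direction. Hence each stage needs $m\gtrsim dL_0^2/\eta^2$ queries with constant probability; a Fano or posterior-concentration bound makes this precise. I expect the main obstacle to be this adaptive posterior argument, specifically making it rigorous when the query points and handles are adaptive. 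My first attempt would write the transcript likelihood as a product of Gaussian densities in the queried projections and bound $\E\langle\hat v,v_k\rangle^2$ by $(1+m\eta^2/\sigma^2)/d$ by induction on queries.

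\textbf{Shadow experiment.} Define a shadow instance where regions $j>k$ are replaced by the plateau, so they are independent of $v_j$. Until the algorithm queries inside an unresolved region, the real and shadow transcripts coincide. Under the shadow, the algorithm's queries are independent of $v_{k+1}$. The probability that some query, or the output, lands in the $\Theta(r)$-ball around $z_{k+1}$ is therefore at most the probability that a random direction aligns with a fixed point, which is $\le N e^{-cd}$. Chaining over stages via the stopping time of first disagreement shows that fewer than $c_N T dL_0^2/\eta^2$ queries leave the algorithm, with probability $>1/2$, stalled before the last region. Every point outside the final region has $\norm{\nabla f}>\eta$, which proves the bound. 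A fixed instance is then obtained by averaging over $v$ (Yao's principle).
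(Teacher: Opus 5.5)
Your architecture matches the paper's: a chain of disjoint regions; samples in the active region that reveal projections of one fixed perturbed vector, which neutralizes reuse; a shadow experiment with suppressed future regions; first-disagreement coupling; and averaging over the hidden directions. The gap is in converting the per-stage information cost into a bound on adaptive traversal, where your shadow step and information step do not fit together.

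\textbf{The indexing and the shadow step.} Your formulas index the chain inconsistently. Region $k$ is centered at $z_k=z_{k-1}+re_1+rv_k$, yet region $k$ is also the one that encodes $v_k$. Under your stated intent, each region encodes the displacement to the \emph{next} one. Then at stage $k$ the active region reveals noisy projections of exactly the direction that locates region $k+1$. Your justification of the $Ne^{-cd}$ bound is that shadow queries are independent of that direction, so a hit is ``a random direction aligning with a fixed point''; under this intent that claim is false. The hit probability must instead be bounded through the posterior given the transcript. For hits on farther regions, this is done after conditioning on the intervening directions. (Read literally, your formulas make $v_{k+1}$ unobservable before region $k+1$ is hit. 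Independence then holds, but the chain is just a hidden needle, and neither your information step nor the claimed per-stage cost $\Theta(dL_0^2/\eta^2)$ plays any role. That would be a different and much cruder argument.)

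\textbf{The information step.} As sketched, it targets only a constant-probability or second-moment (Fano) statement. To drive traversal you would need a version that holds conditionally on an arbitrary adaptive past, covers every query of the stage and the final output, and composes over stages whose lengths the algorithm chooses adaptively. ``Chaining via the stopping time of first disagreement'' handles the true--shadow coupling, not this accounting.

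\textbf{How the paper closes the gap.} Three devices do the work.
\begin{enumerate}
\item It uses Gaussian directions $U_j\sim\Normal(0,I_n/n)$ with sample vectors $U_j+\sigma Z_{h,j}$. The posterior is then exactly Gaussian with covariance $\frac1n\bigl(I+\sigma^{-2}\sum_hP_h\bigr)^{-1}$, where per-handle projectors account for reuse. A uniform-sphere prior gives no such closed form.
\item If the total rank is below $q_0=\lfloor n\sigma^2/16\rfloor$, at least $3n/4$ posterior eigenvalues are at least $4/(5n)$. Hence every ball of radius $1/16$ has posterior mass at most $e^{-cn}$, uniformly over transcripts and estimators.
\item An augmented oracle reveals $U_J$ for free once that rank reaches $q_0$. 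Stage progress thus becomes a deterministic function of rank increments, so $N<Mq_0$ leaves the terminal room unresolved. The exponential bound then union-bounds over all $N$ shadow queries and the output, giving $(N+1)e^{-cn}$.
\end{enumerate}

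\textbf{Quantitative repairs.}
\begin{itemize}
\item The per-query information is about $\eta^2/(d\sigma^2)=\eta^2/L_0^2$, since a projection of a unit vector has prior variance $1/d$. It is not $\eta^2/\sigma^2=\eta^2d/L_0^2$. With your count, $\Omega(d)$ nats would force only $m\gtrsim L_0^2/\eta^2$.
\item The background must be a linear carrier with slope a constant factor above $\eta$. The informative term, including its cutoff-derivative part, must be a small fraction of that slope; the paper uses $4\eta$ and $\gamma/8$. A literal plateau is stationary, and gluing a ramp of slope $\approx\eta$ to the background by cutoffs can cancel the gradient.
\item The region radius must be a small constant fraction of the transverse step; the paper uses $\rho$ versus $16\rho$. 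Otherwise the guess $\hat v=0$ already hits.
\end{itemize}
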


The theorem directly implies the desired Goldstein lower bound.

\begin{corollary}[Goldstein lower bound on a Euclidean ball]
\label{cor:goldstein-lower}
Let \(d\ge2\) and \(L_0,\Delta,\delta,\eps>0\).  There are absolute
constants \(c,C>0\) such that, whenever
\begin{equation}
  0<\eps\le cL_0,
  \qquad
  \Delta\ge C\delta\eps,
  \qquad
  d\ge C\!\left[1+\log\!\left(2+
  \frac{\Delta L_0^2}{\delta\eps^3}\right)\right],
  \label{eq:source-4-5}
\end{equation}
there is a radius \(R=\Theta(\Delta/\eps)\) and a valid bounded-domain
instance family, every member of which contains a stationary point in the ball, for which
\begin{equation}
  N_G^{\mathrm{stat}}(d,R,L_0,\Delta,\delta,\eps)
  \ge
  c\frac{dL_0^2\Delta}{\delta\eps^3}.
  \label{eq:source-4-6}
\end{equation}
The lower bound holds even if sample handles may be retained and queried
arbitrarily many times.
\end{corollary}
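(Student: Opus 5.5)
We derive the corollary from \Cref{thm:smooth-lower} by choosing the smoothness and gradient parameters as the introduction indicates: set \(H=\eps/\delta\) and \(\eta=2\eps\). The whole proof is then a transfer of the smooth hard family to the Goldstein target, plus a check of the parameter conditions.

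The key deterministic fact is the following. For a \(C^1\) function with \(H\)-Lipschitz gradient on \(\R^d\), the Clarke subdifferential is \(\{\nabla f(y)\}\). So \(\partial_\delta f(x)\) is the closed convex hull of \(\{\nabla f(y):\norm{y-x}_2\le\delta\}\). Each such gradient lies in the ball \(B_2(\nabla f(x),H\delta)\), which is closed and convex, so the whole hull lies in that ball. Hence, if \(G_\delta(f,x)\le\eps\), some \(g\in\partial_\delta f(x)\) has \(\norm{g}\le\eps\), and
\[
\norm{\nabla f(x)}_2\le \norm{g}_2+H\delta\le \eps+\eps=2\eps=\eta .
\]
The global smoothness in \eqref{eq:source-3-3} is exactly what makes this valid for outputs near \(\partial X\), where the Goldstein ball leaves \(X\). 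Consequently, on the smooth family of \Cref{thm:smooth-lower}, the event \(\{G_\delta(f,x_{\rm out})\le\eps\}\) is contained in \(\{\norm{\nabla f(x_{\rm out})}_2\le\eta\}\). An algorithm that achieves Goldstein success probability at least \(1/2\) on every member would therefore achieve the gradient target with probability at least \(1/2\). The theorem forbids this whenever \(N\le c_N dL_0^2H\Delta/\eta^4\).

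Next we verify the hypotheses \eqref{eq:source-4-1}, choosing the constants \(c,C\) of \eqref{eq:source-4-5} in terms of \(c_0,C_0\).
\begin{itemize}
\item \(\eta=2\eps\le c_0L_0\) holds if \(c\le c_0/2\).
\item \(H\Delta=\eps\Delta/\delta\ge C_0\eta^2=4C_0\eps^2\) is equivalent to \(\Delta\ge 4C_0\delta\eps\), which holds if \(C\ge 4C_0\).
\item Since \(H\Delta L_0^2/\eta^4=\Delta L_0^2/(16\delta\eps^3)\le \Delta L_0^2/(\delta\eps^3)\), the dimension condition follows if \(C\ge C_0\).
\end{itemize}
The family also meets every requirement of a valid instance for the Goldstein class. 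It satisfies \(f(0)-\min_X f\le\Delta\) and \(\E\Lip(F)^2\le L_0^2\), it is \(C^\infty\) (so locally Lipschitz), and it contains a point \(x_*\in X\) with \(\nabla f(x_*)=0\), i.e.\ \(0\in\partial^C f(x_*)\). The radius satisfies \(R=\Theta(\Delta/\eta)=\Theta(\Delta/\eps)\) by \eqref{eq:source-4-2}.

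Finally, the smooth theorem gives the count
\[
c_N\frac{dL_0^2(\eps/\delta)\Delta}{16\eps^4}=\frac{c_N}{16}\,\frac{dL_0^2\Delta}{\delta\eps^3}.
\]
Any algorithm with fewer than this many evaluations fails on some instance, so \(N_G^{\mathrm{stat}}\ge c\,dL_0^2\Delta/(\delta\eps^3)\) after shrinking \(c\) to at most \(c_N/16\) (also at most \(c_0/2\)). The reusable-handle clause is inherited directly from the theorem's oracle model. There is no real obstacle here; the only delicate point is the boundary issue, which global smoothness resolves.
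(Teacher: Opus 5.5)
Your proposal is correct and follows the paper's proof: you make the same substitution \(H=\eps/\delta\), \(\eta=2\eps\) into \Cref{thm:smooth-lower}, use the same Goldstein-to-gradient inequality \(\norm{\nabla f(x)}_2\le G_\delta(f,x)+H\delta\), and check the parameter conditions, radius, and count in the same way. Your closed-ball containment argument handles the closure in \(\partial_\delta f(x)\) slightly more cleanly than the paper's approximation step, and your explicit constants \(c\le\min\{c_0/2,c_N/16\}\) and \(C\ge4C_0\) fill in the paper's ``up to absolute constants.''
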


The lower bound is a statement about difficulty of locating an existing stationary point. For the full local-gap class, small radii can instead make the target impossible. The following elementary observation distinguishes these two sources of difficulty.

\begin{proposition}[Radius obstruction without an existence promise]
\label{prop:radius-obstruction}
If \(L_0>\eps\) and \(0<R<\Delta/\eps\), then
\[
  N_G(d,R,L_0,\Delta,\delta,\eps)=+\infty.
\]
The analogous conclusion holds for \(N_{\nabla}\) with \(\eta\) in place of \(\eps\), for any \(H>0\).
\end{proposition}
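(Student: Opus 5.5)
The statement to prove is \Cref{prop:radius-obstruction}. The plan is to exhibit a single instance in the full local-gap class on which no output can be stationary; this forces \(N_G=+\infty\). Take the deterministic linear objective \(F(x;\xi)=f(x)=-\eps'\inner{u}{x-x_0}\) with \(\|u\|_2=1\) and slope \(\eps'\) in the range \(\eps<\eps'\le\min\{L_0,\Delta/R\}\). This range is nonempty precisely because \(L_0>\eps\) and \(R<\Delta/\eps\), i.e.\ \(\Delta/R>\eps\). Concretely, pick \(\eps'=\min\{L_0,\Delta/R\}\).

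First I check that the instance is valid. Its sample Lipschitz constant is \(L(\xi)=\eps'\le L_0\), so \(\E L^2\le L_0^2\), and \(\E|F(x_0;\xi)|=0<\infty\). The gap over the ball is
\[
f(x_0)-\min_X f=\eps' R\le\Delta,
\]
with the minimum attained at \(x_0+Ru\). The function is \(C^\infty\) with \(\nabla f\equiv-\eps' u\), so \(\Lip(\nabla f)=0\le H\) for every \(H>0\), and the instance also lies in the smooth class.

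Next I compute the residual. For every \(y\) the Clarke subdifferential is \(\{-\eps' u\}\), so \(\partial_\delta f(x)=\{-\eps' u\}\) for every \(x\) and every \(\delta\). Hence \(G_\delta(f,x)=\eps'>\eps\), and likewise \(\|\nabla f(x)\|_2=\eps'>\eta\) when \(\eta\) replaces \(\eps\).

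Therefore, for any algorithm and any \(N\), the success event has probability \(0<1/2\) on this instance. No finite \(N\) satisfies the uniform requirement, and the complexity is \(+\infty\) by the convention in \Cref{sec:model}.

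There is no real obstacle here. The only point needing care is choosing the slope strictly above \(\eps\) while respecting both the Lipschitz budget and the gap budget, and the strict inequalities in the hypotheses are exactly what make this choice possible. The gap condition is checked only on \(X\), while the residual uses the ambient neighborhood; since \(f\) is linear on all of \(\R^d\), the ambient neighborhood causes no issue.
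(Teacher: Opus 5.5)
Your proposal is correct and matches the paper's proof essentially verbatim: a deterministic linear objective with slope strictly between \(\eps\) and \(\min\{L_0,\Delta/R\}\). It satisfies the Lipschitz and local-gap budgets, is trivially \(H\)-smooth, and has Goldstein residual (and gradient norm) equal to its slope everywhere, so no algorithm can succeed.
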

\begin{proof}
Choose \(a\) with \(\eps<a\le\min\{L_0,\Delta/R\}\), and take the deterministic sample function \(F(x;\xi)=f(x)=-a\langle e_1,x-x_0\rangle\). Its local gap is \(aR\le\Delta\), its Lipschitz constant is \(a\le L_0\), and \(G_\delta(f,x)=a>\eps\) at every point. It is smooth with constant gradient, which also proves the smooth assertion. No algorithm can output a successful point on this instance.
\end{proof}

Our construction uses \(\gamma=4\eta\) and a radius large enough to contain every local bump. At the point \(Re_0\), beyond their clock-coordinate supports, its value is \(-\gamma R\), whereas its initial value is zero. Thus its gap condition forces \(R\le\Delta/(4\eta)\), and the Goldstein specialization has \(R\le\Delta/(8\eps)\). The existence promise in \Cref{thm:smooth-lower,cor:goldstein-lower} therefore matters: it excludes the no-solution instances responsible for the infinite full-class complexity. In particular, the rate
\begin{equation}
  \frac{dL_0^2\Delta}{\delta\eps^3},
  \label{eq:source-4-10}
\end{equation}
is a lower bound on the promised class, not a tight minimax characterization at a common radius.

\paragraph{Remark (localizing the known upper bound).}
For completeness, \Cref{sec:upper} shows that the Kornowski--Shamir method~\citep{KornowskiShamir2024} retains its evaluation rate using only a local gap when \(R\ge C(\Delta/\eps+\delta)\). This supplies a bounded-query guarantee at larger radii, without improving the established complexity order or matching the radius of our hard instances.

\section{Lower-Bound Construction}\label{sec:lower-construction}

We first explain how the construction forces repeated local learning, then define the hard family and verify its admissibility. The information argument in \Cref{sec:lower-proof} will turn this geometry into a query lower bound.

\subsection{Proof overview}\label{sec:architecture}

The product in \Cref{thm:smooth-lower} comes from multiplying a sequential depth by a per-stage scalar-information cost. The construction and admissibility argument are formalized in \Cref{lem:class-verification}; \Cref{lem:posterior,lem:small-ball,lem:early-hit} supply the information and adaptive traversal estimates.

The hard family has three components: a linear carrier that prevents stationarity, disjoint informative bumps that hide the next region, and a terminal bump that creates a stationary point. The bumps must be separated because scalar values can aggregate information from several hidden directions. For example, if \(S=\sum_{j=1}^M U_j\) with independent \(U_j\sim\Normal(0,I_d/d)\), a single observation \(Y=\langle S,v\rangle\) along a fixed unit vector satisfies
\begin{equation}
  \E[S\mid Y]=Yv,\qquad \E\|\E[S\mid Y]\|_2^2=M/d.
  \label{eq:source-5-1}
\end{equation}
In our construction, a value in region \(j\) instead depends only on its own hidden direction \(U_j\). Learning this direction locates the next region. Reusing a sample reveals projections of one fixed noisy vector, so the posterior calculation in \Cref{lem:posterior,lem:small-ball} determines how many scalar observations are needed for this localization.

Two costs then multiply: the gap budget permits \(M=\Theta(H\Delta/\eta^2)\) stages, and resolving each stage costs \(q_0=\Theta(dL_0^2/\eta^2)\) observations. The essential remaining step is to rule out skipping a stage. \Cref{lem:early-hit} does so through a shadow transcript that suppresses unresolved bumps; a first-disagreement coupling transfers this estimate to the actual oracle. The same argument includes the final output, which need not be a query point.

\subsection{The hidden-ball Gaussian chain}\label{sec:construction}

Write \(d=n+1\) and decompose \(x=(s,z)\in\R\times\R^n\), where \(s\) is
the clock coordinate.  Choose a fixed smooth function \(\psi:[0,\infty)\to[0,1]\) equal to one on \([0,1/16]\) and zero on \([1,\infty)\), and set \(\chi(y)=\psi(\|y\|_2^2)\). This gives a smooth cutoff \(\chi\in C_c^\infty(\R^{n+1})\) such that
\begin{equation}
  0\le\chi\le1,
  \qquad
  \chi(y)=1\ \text{for }\norm{y}_2\le\frac14,
  \qquad
  \chi(y)=0\ \text{for }\norm{y}_2\ge1.
  \label{eq:source-6-1}
\end{equation}
The informative bump encodes a transverse direction; the terminal bump cancels the carrier near its center. For \(v\in\R^n\), define
\begin{equation}
  b_v(y)=\chi(y)\inner{v}{y_\perp},
  \qquad
  \tau_0(y)=\chi(y)y_0.
  \label{eq:source-6-2}
\end{equation}
The first two derivatives of this fixed radial profile give operator-norm bounds independent of the dimension. In particular, there is an absolute constant \(C_\chi\ge1\) such that
\begin{equation}
\begin{aligned}
  |b_v(y)|&\le \norm{v}_2,
  &\qquad |\tau_0(y)|&\le1,\\
  \norm{\nabla b_v(y)}_2&\le C_\chi\norm{v}_2,
  &
  \norm{\nabla^2b_v(y)}_{\op}&\le C_\chi\norm{v}_2,\\
  \norm{\nabla\tau_0(y)}_2&\le C_\chi,
  &
  \norm{\nabla^2\tau_0(y)}_{\op}&\le C_\chi
\end{aligned}
  \label{eq:source-6-3}
\end{equation}
for all \(y\).

Choose the carrier slope \(\gamma\) above the target \(\eta\), the room radius \(\rho\) large enough for \(H\)-smoothness, and the informative amplitude \(\lambda\) small enough to preserve descent outside the terminal room. The clock spacing makes supports disjoint, while the transverse step makes a future hit require an accurate direction estimate. Set
\begin{equation}
  \gamma=4\eta,
  \qquad
  \rho=C_\rho\frac{\gamma}{H},
  \qquad
  \ell_{\rm clk}=4\rho,
  \qquad
  a_{\rm step}=16\rho,
  \qquad
  \lambda=\frac{\gamma}{16C_\chi},
  \label{eq:source-6-4}
\end{equation}
where \(C_\rho\) is a sufficiently large absolute constant. The gap budget determines the number of stages: let
\begin{equation}
  M=\left\lfloor c_M\frac{H\Delta}{\gamma^2}\right\rfloor
  \label{eq:source-6-5}
\end{equation}
for a sufficiently small constant \(c_M>0\).  Under \eqref{eq:source-4-1}, constants can be
chosen so that \(M\ge2\).

Draw independent hidden directions
\begin{equation}
  U_1,\ldots,U_M\stackrel{\rm iid}{\sim}\Normal(0,I_n/n)
  \label{eq:source-6-6}
\end{equation}
and define the good event
\begin{equation}
  \mathcal G=\left\{\max_{j\le M}\norm{U_j}_2\le2\right\}.
  \label{eq:source-6-7}
\end{equation}
Let \(c_1=0\) and recursively set
\begin{equation}
  c_{j+1}=c_j+\ell_{\rm clk}e_0+a_{\rm step}(0,U_j),
  \qquad 1\le j\le M.
  \label{eq:source-6-8}
\end{equation}
Equivalently,
\begin{equation}
  c_j=\left((j-1)\ell_{\rm clk},
  a_{\rm step}\sum_{i<j}U_i\right).
  \label{eq:source-6-9}
\end{equation}
The hidden rooms are
\begin{equation}
  B_j=B_2^d(c_j,\rho),
  \qquad 1\le j\le M+1.
  \label{eq:source-6-10}
\end{equation}
Their clock coordinates differ by \(4\rho\), hence the balls are pairwise
disjoint.  On \(\mathcal G\), the center norms are at most \(\sqrt{4^2+32^2}\,M\rho\). Since \(M\ge2\), all rooms are contained in the ball of radius \(40M\rho\). We choose
\begin{equation}
  R=C_{\rm ball}M\rho,\qquad C_{\rm ball}=40.
  \label{eq:source-6-11}
\end{equation}
Since \(M\asymp H\Delta/\eta^2\) and \(\rho\asymp\eta/H\), this radius is
\(\Theta(\Delta/\eta)\), as claimed in \eqref{eq:source-4-2}.

A sample handle \(h\) contains independent vectors
\begin{equation}
  Z_{h,1},\ldots,Z_{h,M}
  \stackrel{\rm iid}{\sim}\Normal(0,I_n/n).
  \label{eq:source-6-12}
\end{equation}
Choose the noise level to use the sample-Lipschitz budget, and define the rank threshold that will trigger a stage reveal in the proof:
\begin{equation}
  \sigma^2=c_\sigma\frac{L_0^2}{\gamma^2},
  \qquad
  q_0=\left\lfloor\frac{n\sigma^2}{16}\right\rfloor,
  \label{eq:source-6-13}
\end{equation}
where \(c_\sigma>0\) is a sufficiently small absolute constant.  The
constant \(c_0\) in \eqref{eq:source-4-1} is chosen later, after \(c_\sigma\), so that
\(\sigma^2\) and hence \(q_0/n\) are bounded below by an absolute constant.
The sample function combines the carrier, informative bumps, and terminal correction:
\begin{equation}
  F_U(x;h)
  =-\gamma s 
  +\lambda\rho\sum_{j=1}^M
  b_{U_j+\sigma Z_{h,j}}
  \left(\frac{x-c_j}{\rho}\right) 
  +\gamma\rho\,
  \tau_0\left(\frac{x-c_{M+1}}{\rho}\right).
  \label{eq:source-6-14}
\end{equation}
Because the supports are disjoint, at most one local term is active at any
point.  Taking expectation over a handle gives
\begin{equation}
  f_U(x)
  =-\gamma s 
  +\lambda\rho\sum_{j=1}^M
  b_{U_j}\left(\frac{x-c_j}{\rho}\right) 
  +\gamma\rho\,
  \tau_0\left(\frac{x-c_{M+1}}{\rho}\right).
  \label{eq:source-6-15}
\end{equation}

\subsection{Admissibility and the stationary set}\label{sec:verification}

The construction must satisfy the analytic bounds for every fixed admissible instance, while the information argument will use an unconditioned Gaussian prior. We therefore first isolate a high-probability event on which the entire family is admissible. The Gaussian estimates in \Cref{lem:gaussian-max}, proved in \Cref{sec:analytic-details}, control both the hidden directions and the sample-wise Lipschitz modulus.

We fix constants in the following order.  First choose the cutoff \(\chi\)
and hence \(C_\chi\), and fix \(C_{\rm ball}=40\). Next choose \(C_\rho\) large enough for smoothness,
then \(c_\sigma\) small enough for the sample-Lipschitz bound, then \(c_0\)
small enough that \(\eta\le c_0L_0\) implies \(\sigma^2\ge64\), and finally
choose \(c_M\) small and \(C_0\) in \eqref{eq:source-4-1} large.  With this hierarchy,
\eqref{eq:source-4-1} implies
\begin{equation}
  M\ge2,
  \qquad
  q_0\ge \frac{n\sigma^2}{32},
  \qquad
  n\ge C\log(2+Mq_0),
  \qquad
  \Prob(\mathcal G)\ge\frac78.
  \label{eq:source-7-3}
\end{equation}
Indeed,
\(M\asymp H\Delta/\eta^2\),
\(q_0\asymp dL_0^2/\eta^2\), and hence
\(Mq_0\lesssim dH\Delta L_0^2/\eta^4\); the logarithm of the extra factor
\(d\) is absorbed by the additive constant and a sufficiently large
constant in \eqref{eq:source-4-1}.  The same assumptions and the floor in \eqref{eq:source-6-5} give
\begin{equation}
  c_R\frac{\Delta}{\eta}
  \le R=C_{\rm ball}M\rho
  \le C_R\frac{\Delta}{\eta}
  \label{eq:source-7-4}
\end{equation}
for absolute constants. The choice \(C_{\rm ball}=40\) already ensures that every room lies in the ball on \(\mathcal G\).

\begin{lemma}[Smoothness, stochastic Lipschitz scale, gap, and stationary set]
\label{lem:class-verification}
For the constant hierarchy above, every realization \(U\in\mathcal G\)
satisfies
\begin{equation}
  \Lip(\nabla f_U;\R^d)\le H,
  \qquad
  \E_h\Lip(F_U(\cdot;h))^2\le L_0^2,
  \qquad
  f_U(0)-\min_{x\in X}f_U(x)\le\Delta.
  \label{eq:source-7-5}
\end{equation}
Moreover,
\begin{equation}
  x\notin B_{M+1}
  \quad\Longrightarrow\quad
  \norm{\nabla f_U(x)}_2>\eta,
  \label{eq:source-7-6}
\end{equation}
while \(\nabla f_U(c_{M+1})=0\).
\end{lemma}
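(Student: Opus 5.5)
Since the rooms $B_1,\dots,B_{M+1}$ are pairwise disjoint, at most one local term of $f_U$ and $F_U(\cdot;h)$ is active at any point. Every claimed bound therefore reduces to a single-room estimate together with the linear carrier $-\gamma s$. The plan checks the four assertions in the order smoothness, sample Lipschitz scale, gap, gradient separation. Throughout we work on $\mathcal G$, where $\norm{U_j}_2\le2$.

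\textbf{Smoothness.} The carrier has zero Hessian. Inside $B_j$ for $j\le M$, the chain rule and \eqref{eq:source-6-3} give
\[
\norm{\nabla^2 f_U}_{\op}\le \lambda\rho\cdot\rho^{-2}C_\chi\norm{U_j}_2\le 2\lambda C_\chi/\rho=\gamma/(8\rho).
\]
In the terminal room the bound is $\gamma\rho\cdot\rho^{-2}C_\chi=C_\chi\gamma/\rho$. Since the Hessian is bounded by $C_\chi\gamma/\rho$ everywhere, and since $f_U$ is $C^\infty$, integrating along segments shows that $\nabla f_U$ is globally Lipschitz with constant $\sup\norm{\nabla^2f_U}_{\op}$; segments may cross several rooms without harm. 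With $\rho=C_\rho\gamma/H$ and $C_\rho\ge C_\chi$, this constant is at most $H$.

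\textbf{Sample Lipschitz scale.} Inside room $j$, $\norm{\nabla F_U}\le\gamma+\lambda C_\chi\norm{U_j+\sigma Z_{h,j}}_2$. The terminal room contributes at most $\gamma+\gamma C_\chi$. Hence
\[
\Lip(F_U(\cdot;h))\le(1+C_\chi)\gamma+\lambda C_\chi\bigl(2+\sigma\max_j\norm{Z_{h,j}}_2\bigr).
\]
Squaring and taking expectations, we use \Cref{lem:gaussian-max} to bound $\E\max_{j\le M}\norm{Z_{h,j}}_2^2\le C$. This uses $n\ge C\log(2+M)$, which follows from \eqref{eq:source-7-3}. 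The result is $\E\Lip^2\lesssim\gamma^2(1+\sigma^2)$. We then need this to be at most $L_0^2$. The $\sigma^2$ term gives $\gamma^2\sigma^2=c_\sigma L_0^2$, which is at most $L_0^2/2$ once $c_\sigma$ is small. The constant term needs $\gamma^2\lesssim L_0^2$, and this is exactly $\eta\le c_0L_0$ with $c_0$ small. This is consistent with $\sigma^2\ge64$, since $\sigma^2=c_\sigma/(16c_0^2)\cdot(L_0/\eta)^2\ldots$ is large when $c_0\ll\sqrt{c_\sigma}$.

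\textbf{Gap.} We have $f_U(0)=0$, because $0=c_1$ and $b$ vanishes on the hyperplane $y_\perp=0$. On $X$, $-\gamma s\ge-\gamma R$, and the local terms are bounded below by $-2\lambda\rho$ and $-\gamma\rho$. Therefore
\[
f_U(0)-\min_X f_U\le\gamma R+\gamma\rho+2\lambda\rho\lesssim\gamma M\rho=C_\rho\gamma^2M/H\le C c_M\Delta,
\]
which is at most $\Delta$ once $c_M$ is small.

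\textbf{Gradients.} Outside all rooms, $\nabla f_U=-\gamma e_0$, whose norm is $4\eta>\eta$. In room $j\le M$, the bump contributes a perturbation of norm at most $\lambda C_\chi\cdot2=\gamma/8$. Hence $\norm{\nabla f_U}_2\ge\gamma-\gamma/8>\eta$. At $c_{M+1}$, $\chi\equiv1$ near $0$, so $\tau_0(y)=y_0$ there. The gradient of $\gamma\rho\,\tau_0((x-c_{M+1})/\rho)$ at the center is then $\gamma e_0$, which cancels the carrier and gives $\nabla f_U(c_{M+1})=0$.

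The only genuinely delicate step is the Lipschitz moment: the maximum over $M$ Gaussian norms must stay $O(1)$. This is where the logarithmic dimension condition enters, through \Cref{lem:gaussian-max}. It is also where the constant hierarchy, with $c_\sigma$ fixed before $c_0$, must be respected.
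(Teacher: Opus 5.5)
Your proposal is correct and follows the paper's proof essentially step for step: disjoint supports reduce everything to a single room, the Hessian bound \(C_\chi\gamma/\rho\le H\) gives global smoothness, \Cref{lem:gaussian-max} controls \(\E_h\max_j\norm{Z_{h,j}}_2^2\) in the sample-Lipschitz moment, and the bump gradient \(\gamma/8\) cannot cancel the carrier outside the terminal room. Absorbing the \(\gamma\rho\) gap term into \(\gamma M\rho\) via \(M\ge1\), where the paper invokes \(C_0\), is a harmless simplification; the one cosmetic slip is your garbled formula for \(\sigma^2\), which should read \(\sigma^2=(c_\sigma/16)(L_0/\eta)^2\ge c_\sigma/(16c_0^2)\).
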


\begin{proof}
The smoothness, sample-Lipschitz, and gap estimates are verified in \Cref{sec:class-estimates}. We retain the stationary-set argument here because it links admissibility to the information lower bound.

Outside the terminal room, the carrier contributes \(-\gamma e_0\).  In a
nonterminal room the local population bump has gradient norm at most
\begin{equation}
  C_\chi\lambda\norm{U_j}_2\le\frac{\gamma}{8}.
  \label{eq:source-7-12}
\end{equation}
Consequently,
\begin{equation}
  \norm{\nabla f_U(x)}_2
  \ge\frac{7\gamma}{8}
  =\frac72\eta>\eta
  \qquad (x\notin B_{M+1}).
  \label{eq:source-7-13}
\end{equation}
Finally, \(\nabla\tau_0(0)=e_0\), so at the terminal center the terminal
bump cancels the carrier exactly:
\(\nabla f_U(c_{M+1})=0\).
\end{proof}

Thus, on the valid-instance event \(\mathcal G\), every successful algorithm
must output a point in the terminal room.  The qualification by
\(\mathcal G\) is essential and will be retained explicitly in the final
probability argument.

\section{Proof of the Lower Bound}\label{sec:lower-proof}

The preceding construction reduces successful optimization to reaching the terminal room. We now show that each new room requires a substantial number of scalar observations, and that adaptive queries cannot bypass this cost. Combining these facts proves \Cref{thm:smooth-lower}; the final subsection derives \Cref{cor:goldstein-lower}. More precisely, under the unconditioned Gaussian prior we will prove, for every algorithm with budget \(N<Mq_0\),
\[
\Prob\bigl(\|\nabla f_U(x_{\rm out})\|_2\le\eta,\ U\in\mathcal G\bigr)
\le (N+1)e^{-cn}.
\]
Since \(\Prob(\mathcal G)\ge7/8\), this implies that some admissible instance has success probability below \(1/2\) for the budgets in the theorem. We keep the Gaussian prior unconditioned during the information argument and restrict to valid instances only at the final averaging step.

\subsection{Information required to resolve one room}\label{sec:information}

Fix a stage \(j\).  For a query \(x\in B_j\), write
\(y=(x-c_j)/\rho\).  Once the known carrier is subtracted, the informative
part of \eqref{eq:source-6-14} is
\begin{equation}
  \lambda\rho\,
  \inner{U_j+\sigma Z_{h,j}}{\chi(y)y_\perp}.
  \label{eq:source-8-1}
\end{equation}
Thus every nonzero query direction gives one exact scalar linear measurement
of \(V_{h,j}=U_j+\sigma Z_{h,j}\).  Granting the center \(c_j\) and allowing
arbitrary linear directions can only strengthen the oracle, so it is enough
to lower-bound this idealized adaptive projection experiment.

\begin{lemma}[Adaptive Gaussian projection posterior]
\label{lem:posterior}
Let
\(U\sim\Normal(0,I_n/n)\) and, independently,
\(Z_h\stackrel{\rm iid}{\sim}\Normal(0,I_n/n)\).  An adaptive experiment has
independent internal randomness and, at each time, may choose a handle and a
linear functional predictably from the past, observing that functional of
\(V_h=U+\sigma Z_h\).  For each handle, discard linearly redundant queries
and Gram--Schmidt the remaining directions; let \(P_h\) be the projector
onto their span.  Conditional on the complete transcript, including the
realized adaptive directions, the posterior law of \(U\) is Gaussian with
covariance
\begin{equation}
  \Sigma
  =\frac1n\left(I+\sigma^{-2}\sum_hP_h\right)^{-1}.
  \label{eq:source-8-2}
\end{equation}
If
\begin{equation}
  q:=\sum_h\rank(P_h),
  \label{eq:source-8-3}
\end{equation}
then each scalar evaluation increases \(q\) by at most one.
\end{lemma}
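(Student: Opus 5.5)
The plan is to prove the posterior formula by induction on the number of evaluations, using Gaussian conjugacy. The key structural fact is that adaptivity does not destroy Gaussianity. Each query direction is a measurable function of the past transcript and of internal randomness independent of \((U,(Z_h)_h)\). Hence, conditional on the past, the new direction is a known deterministic vector. Concretely, I would write the transcript density as a product of the policy's kernel, which does not depend on \(U\) or \(Z\), and the conditional density of each observation given the latent variables. The policy factors then cancel in Bayes' rule. The likelihood as a function of \((U,Z)\) is therefore the same as for a nonadaptive design with the realized directions. This holds up to the degenerate issue that observations are exact, so the correct statement is about conditioning on a linear sigma-field rather than densities.

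To handle exactness cleanly, I would work with the jointly Gaussian vector \((U,(V_h)_h)\) and argue inductively. The claim to carry along is the following: conditional on the transcript up to time \(t\), the latent variables \((U,(Z_h)_h)\) are jointly Gaussian. Their law equals the law of the prior conditioned on the event that \(\langle u_{h,k},V_h\rangle\) equals its observed values, for the realized orthonormalized directions \(u_{h,k}\). A new query to handle \(h\) with direction \(w\) has two cases.

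\begin{itemize}
\item If \(w\in\operatorname{range}(P_h)\), the observation is a deterministic function of the past, so nothing changes and \(q\) is unchanged. This is the ``discard redundant queries'' step.
\item Otherwise, its Gram--Schmidt residual adds one unit vector to the span, so \(\rank(P_h)\) increases by exactly one. This proves the final claim that \(q\) increases by at most one per evaluation.
\end{itemize}

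Conditioning a Gaussian on one more linear functional keeps it Gaussian, so the induction closes.

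It remains to compute the marginal posterior covariance of \(U\). Because different handles have independent \(Z_h\), the data from handle \(h\) is \(P_hV_h=P_hU+\sigma P_hZ_h\). Here \(P_hZ_h\) is Gaussian on \(\operatorname{range}(P_h)\) with covariance \(P_h/n\), independent across handles and of \(U\). Equivalently, in an orthonormal basis of the range, one observes \(r_h=\rank(P_h)\) coordinates of \(U\), each corrupted by independent \(\Normal(0,\sigma^2/n)\) noise. The information-form update then gives posterior precision \(nI+\sum_h n\sigma^{-2}P_h\), which is exactly \eqref{eq:source-8-2}. I would verify this via the standard formula \(\Sigma^{-1}=\Sigma_0^{-1}+A^\top\Sigma_\varepsilon^{-1}A\), with \(A\) stacking the basis vectors of each range and \(\Sigma_\varepsilon=(\sigma^2/n)I\). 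The posterior covariance is deterministic given the realized directions, depending only on the projectors.

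The main obstacle is the rigorous justification that adaptive choice of directions does not bias the posterior. This is the cancellation of policy factors, handled with exact (singular) observations. I would address it by conditioning on the sigma-field generated by past observations and internal randomness, and using that the next direction is measurable with respect to it. Then the conditional law given the new observation follows from the Gaussian conditioning formula applied with a fixed direction. A regular conditional probability argument makes this step legitimate.
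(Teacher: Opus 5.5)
Your proposal is correct and follows essentially the same route as the paper. It uses the same three facts: predictable directions add no policy-dependent factor, each non-redundant Gram--Schmidt direction \(a\) of handle \(h\) adds \((n/\sigma^2)aa^\top\) to the prior precision \(nI\), and redundant queries leave \(q\) unchanged. The one difference is technical: you handle the exact (singular) observations by inductively conditioning the joint Gaussian \((U,(Z_h)_h)\) on a growing linear sigma-field and then applying the nonadaptive information-form update, whereas the paper conditions on \(U\) and uses rotational invariance of \(Z_h\) to obtain a proper sequential Gaussian likelihood for the innovation \(\langle U,a\rangle+\sigma\langle Z_h,a\rangle\).
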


\begin{proof}
Fix one handle.  Suppose its previously retained innovation directions span
\(S\), and let a new nonredundant query have normalized innovation direction
\(a\in S^\perp\).  The component of the new observation in \(S\) is already
known from the earlier observations of the same vector \(V_h\), so the only
new datum is
\begin{equation}
  Y=\inner{U}{a}+\sigma\inner{Z_h}{a}.
  \label{eq:source-8-4}
\end{equation}
The direction \(a\) is measurable with respect to the past.  Conditional on
that past and on \(U\), Gaussian rotational invariance implies that
\(\inner{Z_h}{a}\sim\Normal(0,1/n)\) and is independent of all earlier
innovations.  The same conditional innovation statement holds when the next query uses a different handle, because its unobserved noise coordinates are independent of the other handles conditional on \(U\). The observations themselves are not asserted to be independent after marginalizing over \(U\).
Therefore the conditional likelihood at each step is the ordinary Gaussian
linear-regression likelihood with noise variance \(\sigma^2/n\).  The prior
precision is \(nI\), and each unit innovation direction contributes
\((n/\sigma^2)aa^\top\).  Multiplying the sequential conditional
likelihoods gives
\[
  \Sigma^{-1}
  =nI+\frac n{\sigma^2}\sum_hP_h,
\]
which is \eqref{eq:source-8-2}.  The fact that the directions are adaptive causes no extra
likelihood factor, because each direction is a predictable function of the
already conditioned-on transcript and independent internal randomness.
Finally, one scalar observation can create at most one new innovation
direction.
\end{proof}

\begin{lemma}[Posterior small-ball obstruction]
\label{lem:small-ball}
Let \(A=\sum_hP_h\).  If
\begin{equation}
  q=\tr(A)<q_0,
  \label{eq:source-8-5}
\end{equation}
then, for every transcript-measurable estimator \(\widehat u\),
\begin{equation}
  \Prob\!\left(\norm{U-\widehat u}_2\le\frac1{16}
  \,\middle|\,\mathcal T\right)
  \le e^{-cn}
  \label{eq:source-8-6}
\end{equation}
for an absolute constant \(c>0\).
\end{lemma}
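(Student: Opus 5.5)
By \Cref{lem:posterior}, conditional on the transcript \(\mathcal T\), \(U\) is Gaussian. Its posterior mean \(\mu\) is transcript-measurable and its covariance is \(\Sigma=\frac1n(I+\sigma^{-2}A)^{-1}\), where \(A=\sum_hP_h\) is a sum of orthogonal projectors with \(\tr(A)=q<q_0\). The estimator \(\widehat u\) is also transcript-measurable, so it is a fixed vector under the posterior. I would reduce the claim to a small-ball estimate for a centered Gaussian \(W=U-\mu\sim\Normal(0,\Sigma)\) around the fixed shift \(\widehat u-\mu\). By Anderson's inequality, since \(\Sigma\) is a fixed covariance and the Euclidean ball is symmetric and convex, the probability of a shifted ball is at most that of the centered ball. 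It therefore suffices to bound \(\Prob(\norm{W}_2\le 1/16)\).

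The next step is to find many eigendirections in which \(\Sigma\) has variance of order \(1/n\). Every eigenvalue of \(A\) is nonnegative. Since \(\tr(A)=q<q_0\le n\sigma^2/16\), the number of eigenvalues of \(A\) exceeding \(\sigma^2\) is at most \(q/\sigma^2<n/16\) by Markov counting. So at least \(15n/16\) eigendirections of \(A\) have eigenvalue at most \(\sigma^2\). In each of these, \(\Sigma\) has eigenvalue at least \(\frac1{2n}\). Let \(E\) be their span, of dimension \(k\ge 15n/16\). The projection \(P_EW\) is Gaussian with covariance dominating \(\frac1{2n}I_E\). Anderson's inequality again gives \(\Prob(\norm{W}\le r)\le\Prob(\norm{P_EW}\le r)\le\Prob(\norm{G}\le r)\), where \(G\sim\Normal(0,\frac1{2n}I_k)\). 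Here \(\norm{G}^2\) is \(\frac1{2n}\chi^2_k\) with mean \(k/(2n)\ge 15/32\). With \(r=1/16\) we have \(r^2=1/256\), far below that mean. A standard lower-tail chi-square bound then gives \(\Prob(\chi^2_k\le 2n/256)\le\Prob(\chi^2_k\le k/100)\le (e\cdot\tfrac1{100})^{k/2}\approx e^{-c k}\le e^{-cn}\), via the Chernoff bound \(\Prob(\chi^2_k\le tk)\le (te^{1-t})^{k/2}\).

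The main obstacle is justifying the Anderson step when the shift \(\widehat u-\mu\) and the covariance depend on the transcript. Because the posterior is Gaussian conditional on the full transcript, including the adaptive directions, this causes no real difficulty: conditionally, everything is deterministic except \(U\). I would also double-check the counting of large eigenvalues. The threshold \(q_0=\lfloor n\sigma^2/16\rfloor\) is exactly what ensures that only a \(1/16\) fraction of directions are resolved. If \(\sigma^2\) were small this would fail, which is why \(\sigma^2\ge 64\) is imposed. That condition is not needed for this counting, only to make \(q_0\ge1\).
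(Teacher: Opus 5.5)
Your proof is correct. Its skeleton matches the paper's: a trace count on the eigenvalues of $A$ yields a large $\Sigma$-eigenspace with posterior variance of order $1/n$, and a small-ball estimate on that subspace gives $e^{-cn}$.

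The genuine difference is the final small-ball step.
\begin{itemize}
\item The paper thresholds at $\sigma^2/4$, which gives $k\ge 3n/4$ and variances at least $4/(5n)$. It bounds the density of the projected Gaussian by $(n/(2\pi a))^{k/2}$ \emph{uniformly over its mean}, multiplies by the volume of a radius-$1/16$ ball, and finishes with $\Gamma(k/2+1)\ge (k/(2e))^{k/2}$. This handles the transcript-dependent shift and the non-isotropic covariance together, with no Anderson inequality.
\item You use Anderson's inequality twice: once to remove the shift $\widehat u-\mu$, and once, via an independent decomposition, to pass from $\Sigma|_E\succeq \frac1{2n}I_E$ to the isotropic case. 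You then apply the Chernoff lower-tail bound $(te^{1-t})^{k/2}$ for $\chi^2_k$.
\end{itemize}

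Your route needs more machinery but cleanly separates shift, covariance comparison, and tail. It gives essentially the same exponent; the Chernoff bound is marginally sharper by a factor $e^{-tk/2}$. The paper's route is more elementary, and it makes the obstacle you flag disappear, since a sup-density bound does not depend on where the Gaussian is centered.

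Your closing remark about $\sigma^2\ge 64$ contradicts itself in its first sentence. As you then note, the eigenvalue counting never uses it.
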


\begin{proof}
Because \(q<q_0\le n\sigma^2/16\), fewer than \(n/4\) eigenvalues of \(A\) exceed \(\sigma^2/4\). By \eqref{eq:source-8-2}, there is a posterior covariance eigenspace of dimension \(k\ge3n/4\) on which all eigenvalues are at least \(a/n\), where \(a=4/5\). Conditional on the transcript, project \(U-\widehat u\) onto this space. Its Gaussian density, with arbitrary mean, is bounded by \((n/(2\pi a))^{k/2}\). Multiplying this bound by the volume of a radius-\(r\) ball, with \(r=1/16\), gives
\begin{equation}
\Prob\!\left(\|U-\widehat u\|_2\le r\mid\mathcal T\right)
\le \left(\frac{n}{2\pi a}\right)^{k/2}
       \frac{\pi^{k/2}r^k}{\Gamma(k/2+1)} 
\le \left(\frac{enr^2}{ak}\right)^{k/2}
\le e^{-cn}.
\label{eq:source-8-7}
\end{equation}
Here \(\Gamma(k/2+1)\ge(k/(2e))^{k/2}\), and \(n/k\le4/3\). The bound is uniform over the posterior mean and the estimator, so no centeredness assumption is needed.
\end{proof}

By \eqref{eq:source-6-13} and the constant hierarchy,
\begin{equation}
  q_0
  =\Theta(n\sigma^2)
  =\Theta\!\left(\frac{dL_0^2}{\eta^2}\right).
  \label{eq:source-8-8}
\end{equation}
This is the scalar-information cost of resolving one room, even with
arbitrary adaptive reuse of sample handles.

\subsection{Sequential traversal and the smooth lower bound}\label{sec:traversal}

\Cref{lem:small-ball} controls one unresolved direction, but the lower bound requires that this cost be paid at every stage. A direct argument conditioned on missing future regions would change the posterior. We instead construct a shadow transcript in which unresolved regions leave no trace, and transfer its traversal bound to the true oracle at the first possible disagreement.

\subsubsection{Augmented and shadow experiments}

Fix an arbitrary randomized algorithm \(\mathcal A\) with a deterministic
budget of \(N\) scalar evaluations; include its internal random seed in the
probability space. We regard its query and output rules as measurable maps defined on every finite transcript, including transcripts outside the promised instance class. On every run, query points and the output lie in \(X\), and the algorithm stops after at most \(N\) evaluations. These rules can be extended arbitrarily off the promised class, with a forced stop at \(N\), without changing their behavior on valid instances. This lets us run the same strategy under the unconditioned Gaussian prior and the shadow experiment below. An early-stopping algorithm can be padded with ignored queries at \(x_0\), retaining its original output, so the argument may use exactly \(N\) query slots.

We may strengthen the oracle as follows. It keeps a
current stage \(J\), initially \(J=1\).  For the current room, it tracks the
projection rank \(q_J\) from \cref{lem:posterior}.  As soon as
\(q_J=q_0\), it reveals \(U_J\) to the algorithm for free, constructs
\(c_{J+1}\), and increments the stage.  Giving these reveals can only help an
algorithm, so a lower bound in this augmented model implies the desired
lower bound.

Couple two augmented experiments using the same hidden vectors, handle
noises, and algorithmic randomness.

\begin{itemize}
\item The \emph{true experiment} answers with the full sample function
      \eqref{eq:source-6-14}, containing every room from the outset.
\item The \emph{shadow experiment}, while the current stage is \(J=j\),
      retains all bumps with index at most \(j\) and suppresses every bump in
      rooms \(j+1,\ldots,M+1\).  At an unlock it activates the next bump and
      uses the same pre-sampled handle noises as the true experiment.
\end{itemize}

The shadow experiment need not itself be a stationary sample-function oracle;
it is only a coupling device.  Until a shadow query enters a geometrically
unresolved future room, every scalar answer and every scheduled reveal in
the two experiments is identical.

Let \(\mathcal F_t\) be the shadow filtration just before the \(t\)-th query,
including the internal random seed, all previous query points and answers,
and all directions already revealed at unlocks.  Future-room membership is
not included in \(\mathcal F_t\): suppressed rooms always return only the
carrier in the shadow experiment.

\paragraph{The shadow filtration invariant.}
At a time with stage \(J=j\), the transcript uses only \(U_1,\ldots,U_j\), noise coordinates from rooms with index at most \(j\), and the algorithm's seed. Directions with larger indices and their room noises remain independent of this transcript. This follows inductively: an answer in a revealed room uses its revealed center and direction; an answer in the current room is a predictable projection as in \eqref{eq:source-8-1}; all other answers are the known carrier. Neither suppressed-room membership nor a condition that earlier queries missed those rooms is observed. At an unlock the revealed \(U_j\) may correlate with past answers, but \(U_{j+1}\) and its room noises were independent of all variables used so far. They therefore start the next stage with their original Gaussian laws. During that stage, only its projection likelihoods update its hidden direction. Later queries to earlier rooms use noise vectors independent of the new direction and do not add a likelihood factor for it. These statements hold on each event \(J=j\), so they also apply when the stage index and unlock times are random.

\subsubsection{One-step future-hit bound}

Suppose the current shadow stage is \(J=j\) and \(q_j<q_0\).  The current
center \(c_j\) is \(\mathcal F_t\)-measurable.  Conditional on
\(\mathcal F_t\), the posterior of \(U_j\) has the form in
\cref{lem:posterior}; replies obtained in past rooms depend only on already
revealed directions and independent noises, while replies outside the
current and past rooms are the deterministic carrier.

For \(k>j\),
\begin{equation}
  c_k=c_j+(k-j)\ell_{\rm clk}e_0
      +a_{\rm step}\left(0,\sum_{r=j}^{k-1}U_r\right).
  \label{eq:source-9-1}
\end{equation}
The clock-coordinate intervals of the balls \(B_k\) are disjoint because
\(\ell_{\rm clk}=4\rho\).  Hence a fixed query point can be compatible with
at most one future index \(k\).  For that index, a hit \(x\in B_k\) implies
\begin{equation}
  \norm{
  \frac{x_\perp-c_{j,\perp}}{a_{\rm step}}
  -\sum_{r=j+1}^{k-1}U_r-U_j
  }_2
  \le\frac{\rho}{a_{\rm step}}
  =\frac1{16}.
  \label{eq:source-9-2}
\end{equation}
Condition additionally on the independent future directions
\(U_{j+1},\ldots,U_{k-1}\).  The first two terms in \eqref{eq:source-9-2}, together with the
conditioned future sum, form an estimator of \(U_j\) that is measurable with
respect to the enlarged conditioning sigma-field.  Since those future
directions are independent of \((U_j,\mathcal F_t)\),
\cref{lem:small-ball} applies uniformly and then remains valid after
integrating them out.

\begin{lemma}[Adaptive future-hit bound]
\label{lem:early-hit}
At any shadow time with current stage \(j\) and \(q_j<q_0\),
\begin{equation}
  \Prob\!\left(
  x_t\in\bigcup_{k>j}B_k
  \,\middle|\,\mathcal F_t\right)
  \le e^{-cn}.
  \label{eq:source-9-3}
\end{equation}
The same bound holds for the final output, conditional on the shadow
filtration at termination, whenever the terminal room is still unresolved.
\end{lemma}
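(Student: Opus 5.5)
The plan is to reduce the event \(x_t\in\bigcup_{k>j}B_k\) to a small-ball event for \(U_j\) and then apply \Cref{lem:small-ball}. First I would use the shadow filtration invariant, conditional on \(\mathcal F_t\):
\begin{itemize}
\item the query \(x_t\) is \(\mathcal F_t\)-measurable, since it is a predictable function of the transcript and seed;
\item the center \(c_j\) is \(\mathcal F_t\)-measurable;
\item \(U_j\) has the Gaussian posterior of \Cref{lem:posterior} with \(\tr(A)=q_j<q_0\);
\item \(U_{j+1},\ldots,U_M\) remain independent of \((U_j,\mathcal F_t)\) with their prior laws.
\end{itemize}
Since the clock intervals of the rooms have spacing \(\ell_{\rm clk}=4\rho\), the clock coordinate \(s_t\) of \(x_t\) determines at most one candidate index \(k=k(s_t)>j\), and this index is \(\mathcal F_t\)-measurable. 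If none exists the probability is zero. Otherwise the event reduces to \(x_t\in B_k\) for this single \(k\).

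Next I would apply \eqref{eq:source-9-2}. On \(\{x_t\in B_k\}\), the estimator
\[
\widehat u=\frac{x_{t,\perp}-c_{j,\perp}}{a_{\rm step}}-\sum_{r=j+1}^{k-1}U_r
\]
satisfies \(\norm{U_j-\widehat u}_2\le 1/16\). This estimator is measurable with respect to \(\mathcal F_t\vee\sigma(U_{j+1},\ldots,U_{k-1})\). Because the future directions are independent of \((U_j,\mathcal F_t)\), conditioning on them leaves the posterior of \(U_j\) unchanged, namely the Gaussian of \Cref{lem:posterior} with the same \(A\). \Cref{lem:small-ball} therefore gives the conditional bound \(e^{-cn}\) uniformly in the realized future sum. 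Integrating out \(U_{j+1},\ldots,U_{k-1}\) via the tower property yields \eqref{eq:source-9-3}.

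For the final output I would repeat the argument with \(x_{\rm out}\) in place of \(x_t\) and the terminal filtration in place of \(\mathcal F_t\). The output rule is a measurable function of the full shadow transcript and seed, so \(x_{\rm out}\) is measurable with respect to that terminal filtration. If the terminal room \(B_{M+1}\) is unresolved, the stage at termination is some \(j\le M\) with \(q_j<q_0\). Either the output lies in no future room, or it lies in a unique \(B_k\) with \(k>j\), possibly \(k=M+1\). In the latter case the same estimator argument applies.

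The main obstacle is justifying rigorously that the posterior of \(U_j\) given \(\mathcal F_t\), enlarged by the future directions, is exactly the Gaussian of \Cref{lem:posterior}. This requires that the shadow answers in past rooms and outside rooms carry no likelihood factor for \(U_j\), and that membership of past queries in suppressed rooms is never observed. I would settle this by writing the joint density of the transcript as a product of predictable conditional likelihoods:
\begin{itemize}
\item revealed-room answers depend only on revealed directions and independent noises;
\item current-room answers are projections of \(V_{h,j}\);
\item all other answers are the deterministic carrier.
\end{itemize}
Only the current-room factors involve \(U_j\), and none of the factors involve \(U_{j+1},\ldots\), which gives both the posterior form and the independence claim simultaneously. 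Randomness of the stage index is handled by working on each event \(\{J=j\}\), which is \(\mathcal F_t\)-measurable.
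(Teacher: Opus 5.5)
Your proposal is correct and follows the paper's proof essentially step for step. The clock coordinate singles out at most one future index, \eqref{eq:source-9-2} turns a hit into a radius-\(1/16\) event for \(U_j\) around an estimator built from \(x_t\), \(c_j\), and the intervening future directions, and conditioning on those independent directions lets \Cref{lem:small-ball} apply uniformly before integrating them out; the output is handled identically. Your likelihood-factorization justification of the Gaussian posterior and of the independence of the later directions is exactly what the paper's shadow filtration invariant paragraph supplies.
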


\begin{proof}
The query \(x_t\) is \(\mathcal F_t\)-measurable.  Its clock coordinate
selects at most one future index.  If there is none, the conditional
probability is zero.  Otherwise \eqref{eq:source-9-2}, conditioning on the intervening future
directions, and \cref{lem:small-ball} give the bound.  The output is another
filtration-measurable point and is treated identically.
\end{proof}

Let \(E\) be the event, defined on the shadow trajectory, that some query or
the final output belongs to a room whose index is larger than the current
shadow stage at that time.  Importantly, we do not condition on previous
misses.  Applying \cref{lem:early-hit} directly to every shadow proposal and
using the tower property gives the adaptive union bound
\begin{equation}
  \Prob(E)
  \le\sum_{t=1}^N
      \E\!\left[
      \Prob(E_t\mid\mathcal F_t)
      \right]
      +e^{-cn}
  \le(N+1)e^{-cn}.
  \label{eq:source-9-4}
\end{equation}
A suppressed-room miss contributes no membership information, so the posterior estimate remains applicable at every shadow proposal.

\subsubsection{Coupling and conditioning on the valid-instance event}

On \(E^c\), the true and shadow augmented transcripts are identical by
induction.  Indeed, before an unlock they differ only inside unresolved
future rooms, and at an unlock they reveal the same \(U_j\), activate the
same next bump, and continue with the same retained handle noises.  Each unlock requires \(q_0\) rank increments summed over the retained handles: within a handle, only a direction outside its previously observed span is counted, whereas different handles contribute separately even along the same direction. One scalar evaluation increases this sum by at most one.  Therefore, if
\begin{equation}
  N<Mq_0,
  \label{eq:source-9-5}
\end{equation}
the shadow experiment cannot unlock all \(M\) hidden directions.  Its final
output cannot lie in the terminal room on \(E^c\); by coupling, neither can
the true output.

Let
\[
  S=\{\norm{\nabla f_U(x_{\rm out})}_2\le\eta\}.
\]
The implication \(S\subseteq\{x_{\rm out}\in B_{M+1}\}\) is known only on
\(\mathcal G\), not for every Gaussian realization.  The correct bound is
therefore
\begin{equation}
  \Prob(S\cap\mathcal G)
  \le \Prob(E)
  \le (N+1)e^{-cn}.
  \label{eq:source-9-6}
\end{equation}

By \eqref{eq:source-7-3}, \(n\ge C\log(2+Mq_0)\) and
\(\Prob(\mathcal G)\ge7/8\).  Hence, for a sufficiently small absolute
\(c_N>0\), every budget \(N\le c_NMq_0\) satisfies
\begin{equation}
  (N+1)e^{-cn}\le\frac14.
  \label{eq:source-9-7}
\end{equation}
For the fixed algorithm \(\mathcal A\), let
\(p_{\mathcal A}(U)\) be its success probability on the fixed instance
indexed by \(U\), including handle noise and its own randomization.  From
\eqref{eq:source-9-6}--\eqref{eq:source-9-7},
\begin{equation}
  \E\!\left[p_{\mathcal A}(U)\,\middle|\,\mathcal G\right]
  =\frac{\Prob(S\cap\mathcal G)}{\Prob(\mathcal G)}
  \le\frac{1/4}{7/8}
  =\frac27<\frac12.
  \label{eq:source-9-8}
\end{equation}
Consequently, there exists a fixed realization \(U\in\mathcal G\) for which
\(p_{\mathcal A}(U)<1/2\).  By
\cref{lem:class-verification}, this realization is a valid instance.

Finally,
\begin{equation}
  M=\Theta\!\left(\frac{H\Delta}{\eta^2}\right),
  \qquad
  q_0=\Theta\!\left(\frac{dL_0^2}{\eta^2}\right),
  \label{eq:source-9-9}
\end{equation}
so
\begin{equation}
  Mq_0
  =\Theta\!\left(\frac{dL_0^2H\Delta}{\eta^4}\right).
  \label{eq:source-9-10}
\end{equation}
Because \(\mathcal A\) was arbitrary, this proves
\cref{thm:smooth-lower}.

\subsection{From smooth stationarity to Goldstein stationarity}\label{sec:reduction}

We prove \cref{cor:goldstein-lower}.  The following elementary implication
is the only reduction needed.

\begin{lemma}[Goldstein residual controls the gradient of a smooth function]
\label{lem:goldstein-gradient}
If \(f\) has \(H\)-Lipschitz gradient, then for every \(x\),
\begin{equation}
  \norm{\nabla f(x)}_2\le G_\delta(f,x)+H\delta.
  \label{eq:source-10-1}
\end{equation}
\end{lemma}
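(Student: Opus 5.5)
The plan is a direct convexity argument. For a $C^1$ function, the Clarke subdifferential at every point is the singleton $\{\nabla f(y)\}$. Hence
\[
\partial_\delta f(x)=\clconv\{\nabla f(y):\norm{y-x}_2\le\delta\}.
\]
The $H$-Lipschitz gradient gives $\norm{\nabla f(y)-\nabla f(x)}_2\le H\delta$ for every $y$ in the closed $\delta$-ball. So the generating set lies in the closed Euclidean ball $B_2(\nabla f(x),H\delta)$.

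That ball is closed and convex, so its closed convex hull still lies inside it. Therefore every $g\in\partial_\delta f(x)$ satisfies $\norm{g-\nabla f(x)}_2\le H\delta$.

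Next, $\partial_\delta f(x)$ is nonempty, since it contains $\nabla f(x)$, and it is closed. Hence the distance $G_\delta(f,x)$ is attained at some $g_*\in\partial_\delta f(x)$ with $\norm{g_*}_2=G_\delta(f,x)$. Strictly, attainment is not needed: take any $g$ with $\norm{g}_2\le G_\delta(f,x)+\epsilon'$ and let $\epsilon'\downarrow0$.

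The triangle inequality then gives
\[
\norm{\nabla f(x)}_2\le\norm{g_*}_2+\norm{\nabla f(x)-g_*}_2\le G_\delta(f,x)+H\delta,
\]
which is \eqref{eq:source-10-1}.

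The only point requiring care, and the mild obstacle, is the identification $\partial^C f(y)=\{\nabla f(y)\}$. This holds because $f$ is continuously differentiable; it is strictly differentiable suffices, and $C^1$ with Lipschitz gradient certainly qualifies. The paper's smooth subclass \eqref{eq:source-3-3} assumes this globally on $\R^d$, so the ambient Goldstein ball causes no boundary issue. I would cite the standard fact from Clarke~\cite{Clarke1990} rather than reprove it. The remaining steps are elementary.
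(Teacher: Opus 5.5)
Your proof is correct and follows essentially the same route as the paper: both use that every element of $\partial_\delta f(x)$ is a limit of convex combinations of gradients $\nabla f(y)$ with $\norm{y-x}_2\le\delta$, hence lies within $H\delta$ of $\nabla f(x)$, and then conclude by the triangle inequality and an infimum over $g$. Your containment of $\clconv$ in the closed ball $B_2(\nabla f(x),H\delta)$ just handles the paper's ``up to an arbitrarily small closure error'' step a little more cleanly.
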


\begin{proof}
For any \(g\in\partial_\delta f(x)\), there are points
\(y_i\in B_2^d(x,\delta)\) and convex weights \(\lambda_i\) such that
\(g=\sum_i\lambda_i\nabla f(y_i)\), up to an arbitrarily small closure
error.  Hence
\begin{equation}
  \norm{\nabla f(x)}_2
  \le\norm{g}_2+
  \sum_i\lambda_i\norm{\nabla f(x)-\nabla f(y_i)}_2 
  \le\norm{g}_2+H\delta.
  \label{eq:source-10-2}
\end{equation}
Taking the infimum over \(g\in\partial_\delta f(x)\) proves the claim.
\end{proof}

Apply \cref{thm:smooth-lower} with
\begin{equation}
  H=\frac{\eps}{\delta},
  \qquad
  \eta=2\eps.
  \label{eq:source-10-3}
\end{equation}
If \(G_\delta(f,x)\le\eps\), then
\cref{lem:goldstein-gradient} gives
\(\norm{\nabla f(x)}_2\le2\eps=\eta\).  The assumptions in \eqref{eq:source-4-1} become,
up to absolute constants, precisely \eqref{eq:source-4-5}, while \eqref{eq:source-4-4} becomes
\begin{equation}
  \Omega\!\left(
  \frac{dL_0^2(\eps/\delta)\Delta}{(2\eps)^4}
  \right)
  =
  \Omega\!\left(\frac{dL_0^2\Delta}{\delta\eps^3}\right).
  \label{eq:source-10-4}
\end{equation}
The radius in \eqref{eq:source-4-2} becomes \(R=\Theta(\Delta/\eps)\). Every hard instance has \(\nabla f(x_*)=0\) at its terminal center in the ball, so it also has \(0\in\partial^C f(x_*)\). Thus the family belongs to the promised subclass defining \(N_G^{\mathrm{stat}}\), completing the proof.

\section{Discussion}\label{sec:discussion}

The construction separates sequential progress from the information required to make that progress. Under \(H=\eps/\delta\), the gap budget supports \(\Theta(\Delta/(\delta\eps))\) hidden stages. Each stage requires \(\Theta(dL_0^2/\eps^2)\) scalar observations even with adaptive sample reuse. The logarithmic dimension threshold controls Gaussian norms and the union bound over premature hits; it does not contribute a logarithmic factor to the query lower bound.

The existence promise separates this information obstruction from the geometric impossibility in \Cref{prop:radius-obstruction}. Every hard instance contains a stationary point at the terminal center. Nevertheless, the linear carrier makes its global infimum equal to \(-\infty\), and increasing the query radius increases the local gap. Consequently, the construction cannot be transferred to an unrestricted global-gap model by simply enlarging its ball. The auxiliary localization in \Cref{sec:upper} also requires a larger radius, so it does not settle the complexity of the promised class at the constructed radius.

Two questions remain: whether the same product lower bound holds in the globally bounded-gap model against arbitrary randomized value-query algorithms, and whether a comparable finite upper bound can be proved for an appropriate promised class at the lower-bound radius. Either requires further work. The present result establishes a joint lower bound on solvable bounded-domain instances; it does not settle either question.

\section{Conclusion}\label{sec:conclusion}

We established a joint zeroth-order lower bound for finding Goldstein stationary points on a Euclidean query ball, even when each instance has a stationary point in that ball. Spatially separated regions enforce sequential learning, while an adaptive Gaussian posterior calculation controls arbitrary reuse of sampled functions. The result makes the dimension and accuracy costs arise jointly on the same family, despite adaptive sample reuse. Its scope remains the local-gap, bounded-query model; extending the lower bound to unrestricted queries under a global gap is an open direction.

\clearpage
\appendix
\section{Localization of the Upper Bound}\label{sec:upper}

This appendix records a localization of the Kornowski--Shamir method~\citep{KornowskiShamir2024}. It retains the established evaluation rate while replacing the global gap by a gap over a sufficiently large query ball. It is an auxiliary consequence, and its radius condition differs from that of the lower-bound construction.

\begin{proposition}[Localization of the two-point upper bound]
\label{prop:localized-upper}
Let \(d\ge1\), \(L_0,\delta>0\), \(\Delta\ge0\), and
\(0<\eps\le L_0\).  There are absolute constants \(C_0,C_1>0\) such that
the following holds.  Let \(X=B_2^d(x_0,R)\), and suppose
\begin{equation}
  R\ge C_0\left(\frac{\Delta}{\eps}+\delta\right).
  \label{eq:source-4-7}
\end{equation}
Then a fresh common-sample two-point algorithm, and hence an algorithm in the
reusable-handle model, uses at most
\begin{equation}
  C_1\left(1+\frac{dL_0^2\Delta}{\delta\eps^3}\right)
  \label{eq:source-4-8}
\end{equation}
scalar evaluations and returns \(x_{\rm out}\in X\) with
\begin{equation}
  \Prob\bigl(G_\delta(f,x_{\rm out})\le\eps\bigr)\ge\frac12.
  \label{eq:source-4-9}
\end{equation}
Only the bounded-domain gap
\(f(x_0)-\min_Xf\le\Delta\) is required.
\end{proposition}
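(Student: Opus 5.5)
The plan is to run the Kornowski--Shamir online-to-nonconvex method on the uniformly smoothed objective \(f_\delta(x)=\E_{u}f(x+\delta u)\), with \(u\) uniform on the unit ball, and to check that every quantity in their analysis depends only on \(f\) along the iterate path. If all iterates stay in \(X\), the global gap can be replaced by the local gap. Concretely, fix a step radius \(D\asymp\delta\eps^2/(dL_0^2)\cdot\) (their choice) and a horizon \(T\) giving the stated count \(C_1(1+dL_0^2\Delta/(\delta\eps^3))\) of two-point evaluations. Each evaluation uses a fresh handle at \(x\pm\delta w\) with \(w\) uniform on the sphere. The two-point estimator has second moment \(O(dL_0^2)\) by the standard concentration argument, which uses only \(\E L(\xi)^2\le L_0^2\). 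The output is a uniformly sampled iterate averaged over a block of length \(K\asymp\delta/D\). Its Goldstein residual is controlled, via the inclusion \(\nabla f_\delta(y)\in\partial_\delta f(y)\), by the averaged smoothed gradient over a block of diameter at most \(KD\le\delta\). This yields an ambient \(2\delta\)-Goldstein certificate, which is fixed by halving \(\delta\) at the start.

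The key localization step is a path-length bound. The online learner's increments satisfy \(\|\Delta_t\|\le D\), so every iterate satisfies \(\|x_t-x_0\|\le TD\). With the Kornowski--Shamir parameters, \(TD\asymp(\Delta/\eps)\) up to an absolute constant. The horizon \(T\) is chosen so that the telescoping bound \(f_\delta(x_0)-f_\delta(x_T)\) is about \(\Delta\), while the regret per unit of progress is about \(\eps D\); this forces \(T\approx\Delta/(\eps D)\). Hence \(R\ge C_0\Delta/\eps\) keeps all iterates in \(X\). Their query points \(x_t\pm\delta w\), including the randomized midpoint shifts used in the O2NC analysis, lie within \(\delta\) of the path, and the extra \(C_0\delta\) in \eqref{eq:source-4-7} covers this. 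If some evaluation point could still leave \(X\), I would project it onto \(X\) radially. Since \(R\) exceeds the path length by at least \(\delta\), no such projection should in fact occur, so projection is only a safeguard.

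Next I would replace the global gap. The analysis uses only \(f_\delta(x_0)-\E f_\delta(x_T)\), and \(f_\delta\) at a point \(y\) averages \(f\) over \(B(y,\delta)\). We have \(f_\delta(x_0)\le f(x_0)+L_0\delta\). For points with \(B(x_T,\delta)\subseteq X\) we also have \(f_\delta(x_T)\ge\min_X f\). This gives a smoothed gap of at most \(\Delta+L_0\delta\) instead of \(\Delta\). The additive \(L_0\delta\) contributes \(dL_0^3/\eps^3\) evaluations. I expect this to be the main obstacle, because it is not obviously absorbed into \(dL_0^2\Delta/(\delta\eps^3)\) when \(\Delta\ll L_0\delta\).

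My first attempt is to sharpen the comparison to \(|f_\delta(x_0)-f(x_0)|\le\) something involving the gap itself. If that fails, I would run the analysis with effective gap \(\Delta'=\Delta+\min\{L_0\delta,\dots\}\) and handle the regime \(\Delta\le\eps\delta\) separately. In that regime \(x_0\) itself is nearly Goldstein-stationary in the sense of the descent-lemma contrapositive: otherwise a \(\delta\)-step would decrease \(f\) by more than \(\eps\delta\ge\Delta\) inside \(X\). So outputting \(x_0\) with the constant \(1\) term covers this case. The genuinely delicate point is whether the interpolating case \(\eps\delta<\Delta<L_0\delta\) loses a factor.

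Finally, Markov's inequality on the expected residual bound \(\E\|\bar g\|\le\eps/2\) gives the probability-\(1/2\) guarantee \eqref{eq:source-4-9}. Each step uses a fresh pair, so the procedure is also a valid reusable-handle algorithm.
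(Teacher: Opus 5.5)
\textbf{There is a genuine gap: the regime $\eps\delta<\Delta\ll L_0\delta$ fails with your fixed smoothing radius.} The rest of your plan is the paper's: run Kornowski--Shamir on a smoothed objective, bound the path length by $TD$, return $x_0$ when $\Delta\le\eps\delta$, and finish with Markov. Your small-gap step is exactly the paper's inequality $\delta G_\delta(f,x)\le f(x)-\min_X f$ for $B_2^d(x,\delta)\subset X$.

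In the regime $\eps\delta<\Delta\ll L_0\delta$, which you leave open, a smoothing radius of order $\delta$ gives a smoothed gap of $\Delta+\Theta(L_0\delta)$. This has two consequences.
\begin{enumerate}
\item The evaluation count is inflated by a factor of order $L_0\delta/\Delta$, which can be as large as $L_0/\eps$.
\item The localization breaks. The Kornowski--Shamir parameters give $TD\asymp\Delta_h/\eps$ for the smoothed gap $\Delta_h$, here of order $L_0\delta/\eps$. This can far exceed $C_0(\Delta/\eps+\delta)$. So your claim $TD\asymp\Delta/\eps$ fails, and with it the containment needed for your lower bound $f_\delta(x_T)\ge\min_X f$.
\end{enumerate}

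Sharpening the comparison $f_\delta(x_0)-f(x_0)$ cannot rescue this. Take $f=\min\{L_0\norm{\cdot-x_0}_2+\Delta,\ L_0\dist(\cdot,K)\}$, where $K\subset X$ is a ball of radius exceeding $\rho$ at distance at least $2\rho+\Delta/L_0$ from $x_0$. Its local gap is $\Delta$, but its $\rho$-smoothed gap exceeds $\Delta+L_0\rho/2$.

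\textbf{The missing idea is to decouple the smoothing radius from the Goldstein radius} instead of splitting $\delta$ in half. The paper proceeds as follows.
\begin{itemize}
\item Take $\rho_s=\min\{\delta/2,\Delta/L_0\}$, and let the blocks have diameter $\nu=\delta-\rho_s\in[\delta/2,\delta]$.
\item The two-point estimator $\frac{d}{2\rho_s}[F(z+\rho_sv;\xi)-F(z-\rho_sv;\xi)]v$ has second moment $O(dL_0^2)$ independent of $\rho_s$, so a small smoothing radius costs nothing.
\item The smoothed gap is now $\Delta_h\le\Delta+\rho_sL_0\le2\Delta$.
\item The block bound $\E G_\nu(h,x_{\rm out})\lesssim(\Delta_hV/(T\nu))^{1/3}$ with $\nu\ge\delta/2$ then gives both $T=O(dL_0^2\Delta/(\delta\eps^3))$ and $TD=O(\Delta/\eps)$.
\item The inclusion $\partial_\nu h\subseteq\partial_{\nu+\rho_s}f=\partial_\delta f$ returns you to the original target.
\end{itemize}

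The paper also runs the method on $F(\Pi_X\cdot;\xi)$ to obtain a genuine global gap. It then checks that all queries stay in $X$ and that the output is $2\delta$ from the boundary, so the two Goldstein residuals coincide. Your direct containment route works equally well once $\Delta_h\le2\Delta$ restores $TD=O(\Delta/\eps)$.
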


\begin{proof}
The key device is a projection extension, which supplies a global gap for an auxiliary objective while keeping the algorithm's queries inside the original ball.

Let \(\Pi_X\) be Euclidean projection onto the closed ball \(X\), and define
\begin{equation}
  \widetilde F(x;\xi)=F(\Pi_Xx;\xi),
  \qquad
  \widetilde f(x)=\E_\xi\widetilde F(x;\xi)=f(\Pi_Xx).
  \label{eq:source-11-1}
\end{equation}
Projection is nonexpansive, so
\begin{equation}
  \Lip(\widetilde F(\cdot;\xi))\le L(\xi),
  \qquad
  \E\Lip(\widetilde F(\cdot;\xi))^2\le L_0^2.
  \label{eq:source-11-2}
\end{equation}
Moreover,
\begin{equation}
  \widetilde f(x_0)-\inf_{x\in\R^d}\widetilde f(x)
  =f(x_0)-\min_{x\in X}f(x)
  \le\Delta.
  \label{eq:source-11-3}
\end{equation}
Thus the global Kornowski--Shamir theorem~\citep[Theorem 5]{KornowskiShamir2024} applies to the projected stochastic
objective.  To remove its normalization \(\delta,\eps\in(0,1)\), apply it to
\begin{equation}
  \widehat F(y;\xi)
  =\frac{\widetilde F(x_0+2\delta y;\xi)}{2\delta L_0}.
  \label{eq:source-11-4}
\end{equation}
The normalized Goldstein radius is \(1/2\), the normalized target corresponding
to expectation \(\eps/2\) is \(\eps/(2L_0)\le1/2\), and scaling back preserves
the quantity \(dL_0^2\Delta/(\delta\eps^3)\) up to an absolute constant.

Before invoking the algorithm, we handle the small-gap regime directly.  If \(B_2^d(x,\delta)\subset X\), then
\begin{equation}
  \delta G_\delta(f,x)
  \le f(x)-\min_{y\in X}f(y).
  \label{eq:source-11-5}
\end{equation}
To see this, let \(g_*\) be the minimum-norm point of
\(\partial_\delta f(x)\), and set \(u=-g_*/\norm{g_*}\) when \(g_*\ne0\).
The projection characterization gives
\(\inner{g}{u}\le-\norm{g_*}\) for every
\(g\in\partial_\delta f(x)\).  The restriction \(\phi(t)=f(x+tu)\) is Lipschitz and hence absolutely continuous on \([0,\delta]\). Almost everywhere its derivative is bounded above by the Clarke directional derivative \(f^\circ(x+tu;u)=\max_{v\in\partial^C f(x+tu)}\langle v,u\rangle\le-\|g_*\|\). Integrating gives \(f(x+\delta u)\le f(x)-\delta\|g_*\|\), proving \eqref{eq:source-11-5}; the case \(g_*=0\) is immediate.

Put
\begin{equation}
  Q:=\frac{dL_0^2\Delta}{\delta\eps^3}.
  \label{eq:source-11-6}
\end{equation}
If \(\Delta\le\delta\eps\), then \eqref{eq:source-4-7} ensures \(B_2^d(x_0,\delta)\subset X\), and \eqref{eq:source-11-5} gives \(G_\delta(f,x_0)\le\eps\). Return \(x_0\) without making an oracle call. This includes \(\Delta=0\).

It remains to consider \(\Delta>\delta\eps\). In particular \(Q>dL_0^2/\eps^2\ge1\). Run the Kornowski--Shamir method with
target expectation \(\eps/2\).  It chooses
\begin{equation}
  \rho_s=\min\left\{\frac\delta2,\frac\Delta{L_0}\right\},
  \qquad
  \nu=\delta-\rho_s
  =\max\left\{\frac\delta2,
  \delta-\frac\Delta{L_0}\right\},
  \label{eq:source-11-7}
\end{equation}
Let \(h\) be the uniform \(\rho_s\)-smoothing of \(\widetilde f\). Its initial global gap is at most \(\Delta_h=\Delta+\rho_sL_0\). The common-sample estimator in Kornowski and Shamir~\citep[Lemma 7]{KornowskiShamir2024} is unbiased for \(\nabla h\) and has second moment at most \(V=C_gdL_0^2\), for an absolute \(C_g\ge1\). Choose
\begin{equation}
  T=\lceil C_T C_g Q\rceil\le(C_TC_g+1)Q,
  \label{eq:source-11-8}
\end{equation}
and the clipped increment radius and update step size
\begin{equation}
  D=
  \left(
  \frac{\Delta_h\sqrt\nu}
  {\sqrt V\,T}
  \right)^{2/3},\qquad \alpha=\frac{\Delta_h}{VT},
  \qquad
  \Delta_h:=\Delta+\rho_sL_0\le2\Delta.
  \label{eq:source-11-9}
\end{equation}
Here \(C_T\) is a sufficiently large absolute constant. To check the integer parameters of the external algorithm, write its block length as \(m=\lfloor\nu/D\rfloor\) and its block count as \(K=\lfloor T/m\rfloor\). Since \(\delta/2\le\nu\le\delta\), \(\Delta\le\Delta_h\le2\Delta\), and \(T\ge C_TV\Delta/(\delta\eps^3)\),
\[
\frac D\nu=\left(\frac{\Delta_h}{\sqrt V T\nu}\right)^{2/3}
\le\left(\frac4{C_T}\right)^{2/3}\frac{\eps^2}{V}
\le\frac12.
\]
Also,
\[
TD=\left(\frac{\Delta_h^2\nu T}{V}\right)^{1/3}
\ge\left(\frac{C_T}2\right)^{1/3}\frac\Delta\eps
\ge2\nu
\]
after increasing \(C_T\). Thus \(2\le m\le T/2\), so \(K\ge2\) and the output selection is defined. For clarity, we verify the expectation guarantee with these rounded parameters rather than relying on an asymptotic choice of iteration count.

Initialize \(\Delta_1=0\). At iteration \(t\), set \(x_t=x_{t-1}+\Delta_t\), draw \(s_t\) uniformly from \([0,1]\), and set \(z_t=x_{t-1}+s_t\Delta_t\). Independently draw a fresh sample \(\xi_t\) and a uniform unit vector \(v_t\), and compute
\[
g_t=\frac{d}{2\rho_s}\bigl[\widetilde F(z_t+\rho_sv_t;\xi_t)
 -\widetilde F(z_t-\rho_sv_t;\xi_t)\bigr]v_t.
\]
The two scalar evaluations use the same sample. Update
\[
\Delta_{t+1}=\proj_{B(0,D)}(\Delta_t-\alpha g_t).
\]
For each of the \(K\) complete blocks of length \(m\), form the average of its centers \(z_t\), and return a uniformly selected block average. Let \(T'=Km\). Since \(m\le T/2\), we have \(T'\ge T/2\).

Here is the block estimate underlying the external method. For a block \(I\), projected gradient descent gives, pathwise for every \(u\in B(0,D)\),
\[
\sum_{t\in I}\langle g_t,\Delta_t-u\rangle
\le\frac{2D^2}{\alpha}+\frac\alpha2\sum_{t\in I}\|g_t\|^2.
\]
Choose \(u\) opposite to \(\sum_{t\in I}\nabla h(z_t)\), with norm \(D\) (or zero if the sum vanishes). The differences \(g_t-\nabla h(z_t)\) are martingale differences with second moments at most \(V\). Cauchy--Schwarz therefore bounds the expected norm of their block sum by \(\sqrt{mV}\), even though the chosen comparator depends on the whole block. Taking expectations in the preceding pathwise inequality yields
\[
D\,\E\left\|\sum_{t\in I}\nabla h(z_t)\right\|
\le-\E\sum_{t\in I}\langle g_t,\Delta_t\rangle
 +\frac{2D^2}{\alpha}+\frac{\alpha mV}{2}+D\sqrt{mV}.
\]
Unbiasedness and the uniform segment point imply
\(\E\langle g_t,\Delta_t\rangle=\E[h(x_t)-h(x_{t-1})]\).
Sum over the complete blocks and use the global gap of \(h\). Every center in a block is within distance \(mD\le\nu\) of its block average, so its average gradient belongs to the corresponding Goldstein subdifferential. Consequently,
\[
\begin{aligned}
\E G_\nu(h,x_{\rm out})
&\le\frac{\Delta_h}{T'D}+\frac{2D}{\alpha m}
   +\frac{\alpha V}{2D}+\sqrt{\frac Vm}\\
&\le 8\left(\frac{\Delta_h V}{T\nu}\right)^{1/3}
\le\frac\eps2.
\end{aligned}
\]
The second inequality uses \(T'\ge T/2\), \(m\ge\nu/(2D)\), and \eqref{eq:source-11-9}; the last holds for \(C_T\ge2^{14}\), since \(\Delta_h\le2\Delta\) and \(\nu\ge\delta/2\). The smoothing inclusion \(\partial_\nu h(x)\subseteq\partial_{\nu+\rho_s}\widetilde f(x)\) from Kornowski and Shamir~\citep[Lemma 4]{KornowskiShamir2024} then gives \(\E G_\delta(\widetilde f,x_{\rm out})\le\eps/2\).

It remains to transfer this run to the original ball. Every increment has norm at most \(D\), so every center is within \(TD\) of \(x_0\). The output is an average of such centers and obeys the same bound. Every function query lies within \(TD+\rho_s\).

Substituting \(T=O(Q)\), \(\Delta_h\le2\Delta\), and
\(\nu\le\delta\) into \eqref{eq:source-11-9} gives
\begin{equation}
  TD
  \le C
  Q^{1/3}
  \frac{\Delta^{2/3}\delta^{1/3}}
       {d^{1/3}L_0^{2/3}}
  \le C\frac\Delta\eps.
  \label{eq:source-11-10}
\end{equation}

Choose the constant in \eqref{eq:source-4-7} so that
\begin{equation}
  R\ge TD+2\delta.
  \label{eq:source-11-11}
\end{equation}
Since \(\rho_s\le\delta/2\), the deterministic trajectory bound then
implies that every query made by the algorithm on \(\widetilde F\) actually
lies in \(X\).  At such points
\(\widetilde F=F\), so running the same code with the original oracle produces
exactly the same transcript and output.  In addition, the output is at least
\(2\delta\) away from the boundary of \(X\).  Therefore \(f\) and
\(\widetilde f\) agree on an open neighborhood of every point in
\(B_2^d(x_{\rm out},\delta)\), which implies
\begin{equation}
  \partial^C\widetilde f(y)=\partial^Cf(y)
  \quad\text{for all }\norm{y-x_{\rm out}}\le\delta,
  \qquad
  G_\delta(\widetilde f,x_{\rm out})
  =G_\delta(f,x_{\rm out}).
  \label{eq:source-11-12}
\end{equation}
The expectation bound established above therefore gives
\begin{equation}
  \E G_\delta(f,x_{\rm out})
  =\E G_\delta(\widetilde f,x_{\rm out})
  \le\frac\eps2.
  \label{eq:source-11-13}
\end{equation}
It uses \(2T\) scalar evaluations.  Markov's inequality yields
\begin{equation}
  \Prob\bigl(G_\delta(f,x_{\rm out})>\eps\bigr)
  \le\frac12,
  \label{eq:source-11-14}
\end{equation}
which proves \cref{prop:localized-upper}.
\end{proof}

\section{Analytic Details for the Hard Family}\label{sec:analytic-details}

This appendix supplies the dimension-independent cutoff estimates and Gaussian bounds used in \Cref{sec:verification}, followed by the analytic part of the proof of \Cref{lem:class-verification}.

\subsection{Cutoff derivative bounds}

Write \(r=\|y\|^2\) and \(\chi(y)=\psi(r)\), with the fixed profile from \Cref{sec:construction}. On its support, \(\|y\|\le1\), and
\[
\nabla\chi(y)=2\psi'(r)y,\qquad
\nabla^2\chi(y)=2\psi'(r)I+4\psi''(r)yy^\top.
\]
Their norms are bounded by constants depending only on the fixed profile. For \(w=(0,v)\), the product rule gives
\[
\begin{aligned}
\nabla b_v(y)&=\chi(y)w+\langle w,y\rangle\nabla\chi(y),\\
\nabla^2 b_v(y)&=\langle w,y\rangle\nabla^2\chi(y)
 +w\nabla\chi(y)^\top+\nabla\chi(y)w^\top.
\end{aligned}
\]
Since \(|\langle w,y\rangle|\le\|v\|\) on the support and each rank-one term has operator norm at most the product of its vector norms, these formulas prove the bounds for \(b_v\) in \eqref{eq:source-6-3}. Taking \(w=e_0\) proves the bounds for \(\tau_0\). Outside the support all derivatives vanish. No bound depends on the dimension.

\subsection{Gaussian norm estimates}

\begin{lemma}[Gaussian norm and maximum bounds]
\label{lem:gaussian-max}
There are absolute constants \(c,C>0\) such that, for independent
\(G_1,\ldots,G_M\sim\Normal(0,I_n/n)\),
\begin{equation}
  \Prob\!\left(\max_{j\le M}\norm{G_j}_2>2\right)
  \le Me^{-cn},
  \label{eq:source-7-1}
\end{equation}
and, whenever \(n\ge C\log(2+M)\),
\begin{equation}
  \E\max_{j\le M}\norm{G_j}_2^2\le C.
  \label{eq:source-7-2}
\end{equation}
\end{lemma}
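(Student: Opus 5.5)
The plan is to handle a single \(G\sim\Normal(0,I_n/n)\) first and then take a union bound and integrate the tail. Write \(G=g/\sqrt n\) with \(g\sim\Normal(0,I_n)\). The map \(g\mapsto\norm{g}_2\) is \(1\)-Lipschitz, so Gaussian concentration gives
\[
\Prob\bigl(\norm{g}_2\ge\E\norm{g}_2+t\bigr)\le e^{-t^2/2}.
\]
Since \(\E\norm{g}_2\le\sqrt n\), this yields
\[
\Prob\bigl(\norm{G}_2>1+s\bigr)\le e^{-ns^2/2}\qquad(s\ge0).
\]
Taking \(s=1\) gives \(\Prob(\norm{G}_2>2)\le e^{-n/2}\). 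A union bound over \(j\le M\) proves \eqref{eq:source-7-1} with \(c=1/2\). Alternatively one could use the Laurent--Massart chi-square bound; either route is routine.

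For \eqref{eq:source-7-2}, set \(W=\max_j\norm{G_j}_2\) and use \(\E W^2=\int_0^\infty\Prob(W^2>u)\,du\). Split the integral at \(u=4\); the first part contributes at most \(4\). For \(u>4\), write \(\sqrt u=1+s\) with \(s\ge\sqrt u/2\), so that \(s^2\ge u/4\). The union bound then gives
\[
\Prob(W^2>u)\le Me^{-nu/8}.
\]
Integrating over \(u\) from \(4\) to \(\infty\) gives \((8M/n)e^{-n/2}\). Under \(n\ge C\log(2+M)\) with \(C\ge2\), we have \(Me^{-n/2}\le1\), so \(\E W^2\le4+8/n\le12\). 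This is an absolute constant.

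There is no real obstacle here. The only point needing care is that \(\E\norm{g}_2\le\sqrt n\), which follows from Jensen's inequality. This bound removes any dependence on the dimension, so both constants are absolute. The condition \(n\ge C\log(2+M)\) is used only to absorb the factor \(M\) in the union bound.
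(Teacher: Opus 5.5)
Your proof is correct and follows the paper's route: a single-vector tail bound, a union bound over \(j\le M\), then integration of the tail of the maximum, with \(n\ge C\log(2+M)\) used only to absorb the factor \(M\). The one difference is the tail source and the split point: you use Gaussian Lipschitz concentration of \(\norm{g}_2\) and split at \(u=4\), which gives explicit constants \(c=1/2\) and \(C=12\), while the paper quotes the chi-square tail \(\exp[-cn\min\{t,t^2\}]\) and splits at \(b=2+2\log(2+M)/(cn)\).
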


\begin{proof}
The chi-square tail estimate
\(\Prob(\norm{G_j}_2^2\ge1+t)\le
\exp[-cn\min\{t,t^2\}]\) gives \eqref{eq:source-7-1} by a union bound.  Integrating the
same union bound can be made explicit. For \(Y=\max_j\|G_j\|_2^2\) and \(b=2+2\log(2+M)/(cn)\), the upper tail for \(u\ge b\) gives
\[
\E Y\le b+\int_b^\infty M e^{-cn(u-1)}\,du
=b+\frac{M e^{-cn(b-1)}}{cn}\le C
\]
when \(n\ge C\log(2+M)\). This proves \eqref{eq:source-7-2}.
\end{proof}

\subsection{Smoothness, stochastic Lipschitz scale, and local gap}\label{sec:class-estimates}

We verify the analytic claims in \eqref{eq:source-7-5} for every fixed \(U\in\mathcal G\), using the constant hierarchy from \Cref{sec:verification}.

A nonterminal population bump has Hessian norm at most
\[
  \frac{C_\chi\lambda\norm{U_j}_2}{\rho},
\]
and the terminal bump has Hessian norm at most
\(C_\chi\gamma/\rho\).  The supports are pairwise disjoint, and on
\(\mathcal G\), \(\max_j\norm{U_j}\le2\).  Therefore, at every point,
\begin{equation}
  \norm{\nabla^2f_U(x)}_{\op}
  \le C\frac{\gamma}{\rho}
  =\frac{C}{C_\rho}H\le H
  \label{eq:source-7-7}
\end{equation}
when \(C_\rho\) is sufficiently large.  This is a global bound, not merely
a bound on \(X\).

Because the supports are disjoint, a sample-wise Lipschitz modulus satisfies
\begin{equation}
  L(h)
  \le C\gamma
  +C\lambda\max_{j\le M}\norm{U_j+\sigma Z_{h,j}}_2.
  \label{eq:source-7-8}
\end{equation}
For fixed \(U\in\mathcal G\),
\begin{equation}
  \E_h\max_{j\le M}\norm{U_j+\sigma Z_{h,j}}_2^2
  \le 2\max_j\norm{U_j}_2^2
       +2\sigma^2\E_h\max_j\norm{Z_{h,j}}_2^2 
  \le C(1+\sigma^2)
  \label{eq:source-7-9}
\end{equation}
by \cref{lem:gaussian-max}.  Hence
\begin{equation}
  \E_hL(h)^2
  \le C\gamma^2+C\lambda^2(1+\sigma^2)
  \le C\eta^2+C c_\sigma L_0^2
  \le L_0^2,
  \label{eq:source-7-10}
\end{equation}
by first choosing \(c_\sigma\) and then \(c_0\) sufficiently small.

At the origin all bump values vanish, so \(f_U(0)=0\).  On \(X\), the
carrier is bounded below by \(-\gamma R\), and at most one local term is
active.  Using \eqref{eq:source-6-3}, \(\max_j\norm{U_j}\le2\), and
\(R=C_{\rm ball}M\rho\),
\begin{equation}
  f_U(0)-\min_X f_U
  \le \gamma R+2\lambda\rho+\gamma\rho 
  \le C M\gamma\rho+C\gamma\rho.
  \label{eq:source-7-11}
\end{equation}
Now
\(M\gamma\rho\le c_M C_\rho\Delta\), while
\(\gamma\rho=C_\rho\gamma^2/H\le C\Delta/C_0\) by the second condition in
\eqref{eq:source-4-1}.  Thus a sufficiently small \(c_M\) and sufficiently large \(C_0\)
give the gap bound.

\end{document}